\def\supp{\mathop{\rm supp}\nolimits}
\newtheorem{theorem}{Theorem}[section]
\newtheorem{lemma}[theorem]{Lemma}
\newtheorem{proposition}[theorem]{Proposition}
\newtheorem{definition}[theorem]{Definition}
\newtheorem{remark}[theorem]{Remark}
\newtheorem{example}[theorem]{Example}
\newcommand{\N}{\mathbb{N}}
\newcommand{\R}{\mathbb{R}}
\newcommand{\bchi}{\mathlarger{\chi}}
\renewenvironment{proof}[1][.]{%
\bigskip\noindent{\bf Proof#1 }}{%
\hfill$\blacksquare$\bigskip}
\newcommand{\mS}{\mathcal S}
\newcommand{\mF}{\mathcal F}
\newcommand{\F}{\mathcal F}
\newcommand{\K}{\mathcal K}
\newcommand{\FK}{\mathcal{F}_\mathcal{K}}
\newcommand{\mpS}{\mathcal{S}_{\on{mp}}}
\newcommand{\fS}{\mathcal{S}_{\on{f}}}
\newcommand{\IMS}{M_\mathcal{S}}
\newcommand{\FMS}{Z_\mathcal{S}}
\newcommand{\ve}{\varepsilon}
\newcommand{\on}{\operatorname}
\newcommand{\vp}{\varphi}
\begin{document}
\pagestyle{myheadings}
\title{{Fuzzy-set approach to invariant idempotent measures}}
\author[1]{Rudnei D. da Cunha
\thanks{
Instituto de Matem\'atica e Estat\'istica - UFRGS, Av. Bento Gon\c calves 9500, 91500-900 Porto Alegre - RS - Brazil.\\
E-mail: rudnei.cunha@ufrgs.br}}
\author[1]{Elismar R. Oliveira
\thanks{E-mail: elismar.oliveira@ufrgs.br}}
\author[2]{Filip Strobin\thanks{
Institute of Mathematics, Lodz University of Technology, W\'olcza\'nska 215, 90-924 {\L}\'od\'z, Poland. \; E-mail: filip.strobin@p.lodz.pl}}
\affil[1]{Universidade Federal do Rio Grande do Sul}
\affil[2]{Lodz University of Technology}

\date{\today}
\maketitle

\noindent\rule{\textwidth}{0.4mm}
\begin{abstract}
We provide a {new approach to} the Hutchinson-Barnsley theory for idempotent measures first presented {in N. Mazurenko, M. Zarichnyi, \emph{Invariant idempotent measures,} Carpathian Math. Publ., 10 (2018), 1, 172--178.
} The main feature developed here is a metrization of the space of idempotent measures using the {embedding of the space of idempotent measures to the} space of fuzzy sets. The metric obtained induces a {topology} stronger than the {canonical} pointwise convergence {topology}. A key result is the existence of a bijection between idempotent measures and fuzzy sets and  a conjugation between the Markov operator {of an} IFS on idempotent measures and the fuzzy fractal operator {of the} associated Fuzzy IFS. {This allows} to prove that the Markov operator for idempotent measures is a contraction w.r.t. the induced metric and, from this, to obtain a convergence theorem and { algorithms} that  
 draw pictures of invariant measures as greyscale images.

\end{abstract}
\vspace {.8cm}
\noindent\rule{\textwidth}{0.4mm}
\emph{{Key words and phrases: idempotent measures, iterated function systems, {atractors}, {fractals}, fuzzy sets, invariant measures, {algorithms generating fractal images}}}

\emph{2010 Mathematics Subject Classification: Primary {28A80, 28A33, 37M25; Secondary 37C70, 54E35, 65S05}}

\section{Introduction}
Its been well established in the literature the use of Radon probability measures {in studies of} many systems{, such} as random dynamical systems, Markov chains and Iterated Function Systems (IFS for short) among many others.  In particular, the Hutchinson-Barnsley theory \cite{HUT, BAR88} has settled, in the 80's, the modern basis to {study IFSs and} connecting the attractor to the invariant probability measure which has support on it. From this theory one can get {outstanding} results such as the {Chaos Game} \cite{BAR88}, allowing to draw the attractor by picking iterations according to a probability, or the Ergodic Theorem \cite{Elt87} or \cite{BDEG88}, allowing to compute integrals w.r.t the invariant measure through averages {of randomly picked} iterations according to a probability.\\
The idempotent analysis (a part of the idempotent mathematics),
founded by  Maslov and his collaborators \cite{Lit02} and \cite{Lit07}, brought the notion of {an} idempotent (or Maslov) measure 
with important applications in different parts of mathematics, such as optimization. Roughly speaking, it is a non additive integration theory built over a max-plus semiring.  The natural question arising in this setting is the existence of an idempotent version of the Hutchinson-Barnsley {theory}.  {For example, is it possible to associate in {``}reasonable way" an invariant idempotent measure to each IFS having an attractor?}  The answer came in a natural way defining the {analogon} of the Markov operator acting on idempotent measures, where each map of {an} IFS act on a measure by a pushforward {transformation}, and finding its fixed points which {should} be the invariant idempotent measures.

In \cite{MZ} it was proved that each max-plus normalized IFS ${\mpS}$ generates the unique invariant idempotent measure $\mu_{\mS}$, which a contractive fixed point (w.r.t. the canonical {pointwise convergence} topology $\tau_p$ on $I(X)$) of the appropriately defined idempotent Markov operator ${\IMS}$. The proof is topological and does not base on the possible contractiveness of $\IMS$ w.r.t. some metric on $I(X)$. In the paper we give an alternative proof of this fact by using a completely different approach.

We first define a natural bijection $\Theta$ between the space $I(X)$ and the space $\FK(X)$ of {``}compact{"} fuzzy subsets of $X$, so that the topology induced on $I(X)$ by $\Theta$ is stronger than $\tau_p$. Then we prove that each max-plus normalized IFS ${\mpS}$ on $I(X)$ generates appropriate fuzzy IFS ${\fS}$ on ${\FK(X)}$ and such that the idempotent Markov operator ${\IMS}$ and the fuzzy Markov operator ${\FMS}$ generated by ${\fS}$ are conjugated w.r.t. the map $\Theta$, which means that ${\Theta\circ \IMS=\FMS\circ\Theta}$. This result can move the whole discussion to the setting of fuzzy attractors of fuzzy IFSs. In particular, the existence of idempotent invariant measure $\mu_{\mathcal{S}}$ for contractive max-plus IFS can be explained by the existence of fuzzy attractor ${u_{\mS}}$ of the corresponding fuzzy IFS ${\fS}$.

Moreover, this approach allows to define a natural metric $d_\theta$ on $I(X)$ so that idempotent Markov operators generated by contractive max-plus normalized IFSs are contractive, and hence the existence of idempotent invariant measure for such IFSs can be explained by the classical fixed point theorems.

{Then} we imply certain algorithms for fuzzy IFSs to get the approximations and illustrations of the idempotent invariant measures.



\section{{The space of idempotent probability measures}}\label{sec:set_prob_measures_I(X)}
Let {$\mathbb{R}_{{\rm max}}:=\mathbb{R} \cup \{-\infty\}$} {be the extended set of real numbers}. Consider the operations
\begin{itemize}
  \item $\oplus: \mathbb{R}_{{\rm max}}\times \mathbb{R}_{{\rm max}} \to \mathbb{R}_{{\rm max}}$ given by \\ $x \oplus y = \max \{ x, y\}$ and
  \item $\odot: \mathbb{R}_{{\rm max}}\times \mathbb{R}_{{\rm max}} \to \mathbb{R}_{{\rm max}}$ given by \\ $x \odot y = x+y$.
\end{itemize}
Then we define the \emph{max-plus semiring} $S$ as the algebraic  structure $S=(\mathbb{R}_{{\rm max}}, \oplus, \odot)$. From the usual conventions we obtain
\begin{itemize}
  \item $-\infty \oplus x= x  \oplus -\infty= x$, for all $x \in \mathbb{R}_{{\rm max}}$ and
  \item $0 \odot x= x  \odot 0= x$, for all $x \in \mathbb{R}_{{\rm max}}$.
\end{itemize}
Then $S$ is a semiring with null element  $\mathbb{0}=-\infty$ and identity element $\mathbb{1}=0$. Moreover, $S$ is idempotent because
{
\begin{itemize}
\item $x \oplus x= x $, for all $x \in \mathbb{R}_{{\rm max}}$.
\end{itemize}
}
Let $X$ be a compact Hausdorff space. We consider the usual algebra of continuous functions from $X$ to $\mathbb{R}$ denoted { by $C(X)$.}

Now we introduce {the certain} algebraic structure in $C(X)$ by{defining} the operations $\oplus$ and $\odot${, for any $\varphi, \psi \in C(X)$}:
\begin{itemize}
  \item $(\varphi \oplus \psi)(x)= \varphi(x)  \oplus \psi(x)$, for all $x \in X$ and
  \item $(\varphi \odot \psi)(x)= \varphi(x)  \odot \psi(x)$, for all $x \in X$.
\end{itemize}

\begin{definition}\label{def:idemp_prob_measure}\cite{Zar, ZAI}
  \emph{A functional (not necessarily linear nor continuous) $\mu:C(X) \to \mathbb{R}$ satisfying
  \begin{enumerate}
    \item $\mu(\lambda)=\lambda$ for all $\lambda\in\R$ (normalization);
    \item $\mu(\lambda \odot \psi)=\lambda \odot \mu(\psi)$, for all $\lambda \in\mathbb{R}$ and $\psi \in C(X)$;
    \item $\mu(\varphi \oplus \psi)=\mu(\varphi) \oplus \mu(\psi)$, for all $\varphi, \psi \in C(X)$,
  \end{enumerate}
  is called an} idempotent probability measure \emph{(or }Maslov measure\emph{)}.
\end{definition}

{Note that in appropriate places above we identified real values $\lambda$ with constant maps defined on $X$}.
The set of all idempotent probability measures in $X$ is denoted $I(X)$.


Canonically (see \cite{Rep10}, \cite{Zar} and \cite{ZAI}), we endow $I(X)$ with the topology $\tau_p$ of the pointwise convergence, that is, the basis of the topology $\tau_p$ consists of sets of the form
$$
\{\mu\in I(X):|\mu(\varphi_1)-\mu_1(\varphi_1)|<\varepsilon,...,|\mu(\vp_n)-\mu_n(\vp_n)|<\varepsilon\}{,}
$$
where $\varepsilon>0$, $n\in\N$ and $\mu_1,...,\mu_n\in I(X)$ and $\vp_1,...,\vp_n\in C(X)$. Equivalently, $\tau_p$ is the topology on $C(X)$ induced from the Tychonoff product topology on $\R^X$.\\
Clearly, for $(\mu_n)\subset I(X)$ and $\mu\in I(X)$, we have that
$\mu_n \overset{\tau_p}{\to} \mu$ in $I(X)$ {if and only if} $\mu_n(\psi) \to \mu(\psi)$ for all $\psi \in C(X)$.

\begin{example}
  Consider $x_0 \in X$ a fixed point and the functional $\mu:C(X) \to \mathbb{R}$ given by $\mu(\psi):=\psi(x_0),$ for all $\psi \in C(X)$. Obviously $\mu(0)=0(x_0)=0$ and $$\mu(\lambda \odot \psi)= (\lambda + \psi)(x_0)=\lambda + (\psi(x_0))=\lambda \odot \mu(\psi).$$ For the last, $$\mu(\varphi \oplus \psi)=(\vp\oplus\psi)(x_0)=\varphi(x_0) \oplus \psi(x_0)  =\mu(\varphi) \oplus \mu(\psi)$$ shows that $\mu \in I(X)$. This is the Dirac idempotent probability measure with base point $x_0 \in X$ denoted $\mu= \delta_{x_0}$.
\end{example}

\begin{example}
  Consider { a sequence $(x_n)\subset X$ that is convergent to a point $x_0\in X$.} 
  Obviously, for every $\psi\in C(X)$, we have $\delta_{x_{n}}(\psi)= \psi(x_{n}) \to \psi(x_{0})= \delta_{x_{0}}(\psi)$ because $\psi$ is continuous. Thus {$\delta_{x_{n}} \to \delta_{x_{0}}$.}
\end{example}

In order to define conditional probabilities and the support we consider a continuous map $f$ from another compact metric Hausdorff space $Y$ to $X$. There is a canonical way to relate $I(Y)$ to $I(X)$ via a covariant functor $I(f): I(Y) \to I(X)$ given by
$$ \forall_{\vp\in C(X)}\;I(f)(\nu) (\varphi)= \nu(\varphi\circ f),$$
for all $\nu\in I(X)$.




\begin{remark}\label{fil4}\emph{
In the papers \cite{Zar} and \cite{ZAI} there are considered the problem of metrization of $I(X)$. Despite natural counterparts of classical metrics on the space of probablility measures are only pseudometrics, the topology $\tau_p$ on $I(X)$ is metrizable. In particular, $\tau_p$ can be considered as the topology induced by the notion of convergence.} 
\end{remark}

The next key idea is the \emph{density} of an idempotent probability measure introduced in \cite{Kol88} and developed in \cite{Kol97} and \cite{Aki99}.  Given $\mu \in I(X)$ we can always define the density function of $\mu$,  $ \lambda_{\mu} : X \to [-\infty,\, 0]$  by
$$ \lambda_{\mu}(x)= \inf\{ \mu(\varphi) \, | \, \varphi \in C(X), \; \varphi \leq 0, \; \varphi(x)=0 \}.$$
The following lemma lists basic properties of densities see, e.g., \cite[page 39]{ZAI}, \cite[Theorem 1.5]{Kol97} (where the $-\infty$ is replaced by $\infty$ and thus the upper semicontinuity is replaced by the lower one; note that also in (\ref{fil1}) we have $\max$ since upper semicontinuous map on compact space attains its maximum)  \cite{MoDo99}.

\begin{lemma}\label{fil6}
$\;$
\begin{itemize}
\item[(1)] The density $\lambda_{\mu}$ of an idempotent measure $\mu\in I(X)$ has the following properties:
\begin{itemize}
\item[(1i)] $\lambda_{\mu}$ is upper semicontinuous {(usc for short)};
\item[(1ii)] $\lambda_{\mu}(x)=0$ for some $x\in X$;
\item[(1iii)] $
\mu=\bigoplus_{x\in X}\lambda_{\mu}(x)\odot\delta_x$, that is, for every $\vp\in C(X)$, we have
\begin{equation}\label{fil1}
\mu(\vp)=\bigoplus_{x\in X}\lambda_{\mu}(x)\odot\vp(x)=\max\{\lambda_{\mu}(x)+\vp(x):x\in X\}{;}\end{equation}
\item[(1iv)] the density $\lambda_{\mu}$ of $\mu\in I(X)$ is uniquely determined.
\end{itemize}
\item[(2)] If $\lambda:X\to[-\infty,0]$ is upper semicontinuous and $\lambda(x)=0$ for some $x\in X$, then the map
$$\mu_{\lambda}=\bigoplus_{x\in X}\lambda(x)\odot\delta_x$$
is an idempotent measure, that is, $\mu_{\lambda}\in I(X)$.
\end{itemize}
\end{lemma}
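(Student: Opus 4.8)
The plan is to extract two elementary facts from Definition~\ref{def:idemp_prob_measure} first: that $\mu$ is monotone (if $\vp\le\psi$ then $\psi=\vp\oplus\psi$, so $\mu(\psi)=\mu(\vp)\oplus\mu(\psi)\ge\mu(\vp)$), and that $\lambda_\mu\le 0$ (put $\vp\equiv 0$ in the defining infimum). Next I would replace the constrained infimum by the handier formula
\[
\lambda_\mu(x)=\inf\{\mu(\vp)-\vp(x):\vp\in C(X)\}.
\]
Here the inequality $\ge$ holds because the admissible $\vp$ satisfy $\mu(\vp)-\vp(x)=\mu(\vp)$, while $\le$ follows by testing with $\psi:=\min(\vp-\vp(x),0)$, which is admissible and satisfies $\psi\le\vp-\vp(x)$, so that $\mu(\psi)\le\mu(\vp)-\vp(x)$ by monotonicity and axiom~(2). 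Item~(1i) is then immediate: $\lambda_\mu$ is a pointwise infimum of the continuous functions $x\mapsto\mu(\vp)-\vp(x)$, hence usc.

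For (1iii) I would prove the two halves of $\mu(\vp)=\max_x(\lambda_\mu(x)+\vp(x))$ separately. The inequality $\ge$ is just the displayed formula rearranged, the maximum on the right being attained since $\lambda_\mu+\vp$ is usc on the compact space $X$. For the reverse inequality, write $c:=\max_x(\lambda_\mu(x)+\vp(x))$ and fix $\ve>0$. For each $x$ pick an admissible $\psi_x$ (so $\psi_x\le 0$, $\psi_x(x)=0$) with $\mu(\psi_x)\le c-\vp(x)$, which is possible because $\lambda_\mu(x)\le c-\vp(x)$ and, in the case $\lambda_\mu(x)=-\infty$, the infimum defining it is $-\infty$. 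Set $g_x:=\psi_x\odot(\vp(x)+\ve)$; then $\mu(g_x)=\mu(\psi_x)+\vp(x)+\ve\le c+\ve$ by axiom~(2), while $g_x-\vp$ is continuous with value $\ve>0$ at $x$, so $g_x\ge\vp$ on an open neighbourhood $U_x$ of $x$. By compactness finitely many $U_{x_1},\dots,U_{x_n}$ cover $X$, whence $\vp\le g_{x_1}\oplus\cdots\oplus g_{x_n}$; monotonicity together with axiom~(3) then gives $\mu(\vp)\le\max_i\mu(g_{x_i})\le c+\ve$, and letting $\ve\to 0$ proves (1iii). Item~(1ii) drops out by evaluating (1iii) at $\vp\equiv 0$: $0=\mu(0)=\max_x\lambda_\mu(x)$, and this maximum, attained because $\lambda_\mu$ is usc on a compact space, is reached at some $x$.

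For the uniqueness statement (1iv) I would show that any usc $\lambda\colon X\to[-\infty,0]$ which vanishes somewhere and satisfies $\mu(\vp)=\max_x(\lambda(x)+\vp(x))$ for all $\vp\in C(X)$ must equal $\lambda_\mu$. Testing the defining infimum of $\lambda_\mu(x)$ with an admissible $\psi$ gives $\mu(\psi)=\max_y(\lambda(y)+\psi(y))\ge\lambda(x)$, so $\lambda_\mu\ge\lambda$. Conversely, fix $x$ and $\ve>0$; usc of $\lambda$ yields an open $U\ni x$ with $\lambda<\lambda(x)+\ve$ on $U$, and Urysohn's lemma (valid since a compact Hausdorff space is normal) produces a continuous $h$ with $h(x)=0$, $-M\le h\le 0$, and $h\equiv -M$ off $U$; then $h$ is admissible and $\lambda_\mu(x)\le\mu(h)=\max_y(\lambda(y)+h(y))\le\max(\lambda(x)+\ve,-M)=\lambda(x)+\ve$ for $M$ large, so $\lambda_\mu\le\lambda$ (when $\lambda(x)=-\infty$ one runs the same argument with an arbitrary bound $-K$ in place of $\lambda(x)+\ve$ to get $\lambda_\mu(x)=-\infty$). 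Finally, part (2) is a direct check of the three axioms for $\mu_\lambda(\vp):=\max_x(\lambda(x)+\vp(x))$: this is a well-defined real number because $\lambda+\vp$ is usc on compact $X$ and is $\ge\vp(x_0)$ at a point $x_0$ with $\lambda(x_0)=0$; normalization and axiom~(2) follow from $\max_x\lambda(x)=0$ together with pulling constants out of the maximum; and axiom~(3) is the distributivity $\max_x\max(a_x,b_x)=\max(\max_x a_x,\max_x b_x)$ applied with $a_x=\lambda(x)+\vp(x)$ and $b_x=\lambda(x)+\psi(x)$.

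The step I expect to be the real obstacle is the $\le$ half of (1iii): the other parts are formal once the alternative formula for $\lambda_\mu$ is available, but this one genuinely needs compactness of $X$ to upgrade the local, neighbourhood-by-neighbourhood domination of $\vp$ by the test functions $g_x$ to a single global domination by a finite $\oplus$, after which axiom~(3) converts the infima defining the densities into the asserted maximum. A secondary technical nuisance, handled above by working with $\ve$-approximants rather than exact minimizers, is that $\lambda_\mu$ may legitimately take the value $-\infty$, already for a Dirac measure.
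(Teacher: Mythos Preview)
The paper does not supply its own proof of this lemma; it merely records the statement and cites \cite{ZAI}, \cite{Kol97}, and \cite{MoDo99} for the result. Your proposal therefore goes well beyond what the paper does, giving a complete self-contained argument, and it is essentially correct. The reformulation $\lambda_\mu(x)=\inf_{\vp\in C(X)}(\mu(\vp)-\vp(x))$ is a clean device that makes (1i) immediate and streamlines the remaining parts, and your compactness-and-finite-cover argument for the $\le$ half of (1iii) is the standard and correct route.

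There is one minor slip in that $\le$ half. From $\lambda_\mu(x)\le c-\vp(x)$, where the left side is an infimum over admissible $\psi$, you cannot in general select $\psi_x$ with $\mu(\psi_x)\le c-\vp(x)$: at a maximiser $x$ of $\lambda_\mu+\vp$ you have equality $\lambda_\mu(x)=c-\vp(x)$, and the defining infimum need not be attained. The fix is trivial and already built into your $\ve$-framework: choose $\psi_x$ with $\mu(\psi_x)\le c-\vp(x)+\ve$ (always possible by definition of infimum, and your $-\infty$ remark covers that case too), so that $\mu(g_x)\le c+2\ve$; the cover argument then yields $\mu(\vp)\le c+2\ve$, and letting $\ve\to 0$ finishes as before. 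With this adjustment the proof is complete.
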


Using the above lemma, if we define the set of densities
$$U_{S}(X):=\Big\{ \lambda: X \to [-\infty,\, 0]\, | \, \lambda{\text{ is usc and }\lambda(x_0)=0\text{ for some }x_0 \in X} \Big\},$$
then we see that
$$I(X)= \left\{\bigoplus_{x \in X}  \lambda(x)\odot \delta_{x} \; | \; \lambda \in U_{S}(X)\right\}.$$


An important notion is the support of an idempotent measure (see, e.g., \cite{ZAI}, \cite{Zar}). With the use of the notion of density, we give here an equivalent definition (see \cite{ZAI}): by the \emph{support} of an idempotent measure $\mu=\bigoplus_{x \in X}  \lambda(x)\odot \delta_{x}\in X$, we will mean the set
$$
\operatorname{supp} \mu:=\overline{\left\{x \in X: \lambda(x)>-\infty\right\}}.
$$

{Take another compact metric space $Y$ and fix a continuous map $\phi: X \to Y$. We now define the max-plus pushforward map $I(\phi): I(X) \to I(Y)$ given by
$$I(\phi)(\mu)(\varphi):=\mu( \varphi\circ \phi), \; \forall \varphi \in C(Y),$$
for any $\mu \in I(X)$.}
The next lemma gives a natural description of density of $I(\phi)$ (alternative description for idempotent measures with finite support can be found in \cite[page 484]{Zar}).

\begin{lemma}\label{fil5}In the above frame, for every
$\mu=\bigoplus_{x\in X}\lambda(x)\odot\delta_x\in I(X)$, we have that
\begin{equation}\label{filip1}
I(\phi)(\mu)=\bigoplus_{{y\in Y}}\lambda_\phi(y)\odot\delta_y
\end{equation}
where {
$$
\lambda_\phi(y)=\max\{\lambda(x):x\in\phi^{-1}(y)\}
$$
and we additionally assume  $\max\varnothing:=-\infty$.

Additionally, $\lambda_\phi\in U_S({Y})$, so it is the density of $I(\phi)$.}
\end{lemma}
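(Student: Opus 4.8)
The plan is to evaluate the functional $I(\phi)(\mu)$ on an arbitrary test function, rewrite the result by means of the density representation of Lemma~\ref{fil6}(1iii), recognize it as the density representation attached to $\lambda_\phi$, and then check separately that $\lambda_\phi\in U_S(Y)$. Since $I(\phi)(\mu)$ already belongs to $I(Y)$, the uniqueness of densities from Lemma~\ref{fil6}(1iv) will then identify $\lambda_\phi$ as the density of $I(\phi)(\mu)$ and finish the proof.

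First I would fix $\varphi\in C(Y)$. Then $\varphi\circ\phi\in C(X)$, so by $I(\phi)(\mu)(\varphi)=\mu(\varphi\circ\phi)$ and (\ref{fil1}),
\[\begin{aligned}
I(\phi)(\mu)(\varphi)&=\mu(\varphi\circ\phi)=\max_{x\in X}\bigl(\lambda(x)+\varphi(\phi(x))\bigr)\\
&=\max_{y\in\phi(X)}\;\max_{x\in\phi^{-1}(y)}\bigl(\lambda(x)+\varphi(y)\bigr)=\max_{y\in\phi(X)}\bigl(\lambda_\phi(y)+\varphi(y)\bigr).
\end{aligned}\]
Here one uses that each fibre $\phi^{-1}(y)$ is closed in the compact space $X$, hence compact, so the usc function $\lambda$ attains its maximum on it (with value $-\infty$ allowed, when $\lambda\equiv-\infty$ on the fibre). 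Since $\lambda_\phi(y)=-\infty=\max\varnothing$ for $y\notin\phi(X)$, such points never contribute to the outer maximum, and the last expression equals $\max_{y\in Y}\bigl(\lambda_\phi(y)+\varphi(y)\bigr)$, which is precisely (\ref{filip1}).

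It remains to verify $\lambda_\phi\in U_S(Y)$. Clearly $\lambda_\phi\le 0$ since $\lambda\le 0$, and if $x_0\in X$ satisfies $\lambda(x_0)=0$ (Lemma~\ref{fil6}(1ii)), then $\lambda_\phi(\phi(x_0))\ge\lambda(x_0)=0$, so $\lambda_\phi$ attains the value $0$. For upper semicontinuity I would show that the superlevel sets are closed: for $c\in\R$, using again that the maximum over a nonempty fibre is attained, one gets the identity $\{y\in Y:\lambda_\phi(y)\ge c\}=\phi\bigl(\{x\in X:\lambda(x)\ge c\}\bigr)$; the right-hand side is the continuous image of a closed, hence compact, subset of $X$, therefore compact and so closed in the Hausdorff space $Y$. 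Thus $\lambda_\phi\in U_S(Y)$, and since $I(\phi)(\mu)\in I(Y)$ coincides with $\bigoplus_{y\in Y}\lambda_\phi(y)\odot\delta_y$ by the computation above, Lemma~\ref{fil6}(1iv) shows that $\lambda_\phi$ is the density of $I(\phi)(\mu)$.

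I do not anticipate a serious obstacle: the argument is essentially a fibrewise reorganization of a maximum. The only points needing care are the bookkeeping with the value $-\infty$ and with possibly empty fibres (handled uniformly by the convention $\max\varnothing=-\infty$) and the fact that the fibrewise maxima are genuinely attained — this is exactly where compactness of $\phi^{-1}(y)$ and upper semicontinuity of $\lambda$ enter, both in the main computation and in the superlevel-set identity used for usc of $\lambda_\phi$.
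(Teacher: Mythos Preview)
Your proof is correct and follows essentially the same approach as the paper: the fibrewise reorganization of the maximum is identical, and the verification that $\lambda_\phi\in U_S(Y)$ matches the paper's. The only cosmetic difference is that the paper proves upper semicontinuity of $\lambda_\phi$ via the sequential criterion (extracting a convergent subsequence in $X$), whereas you use the equivalent superlevel-set characterization $\{y:\lambda_\phi(y)\ge c\}=\phi(\{x:\lambda(x)\ge c\})$; both rely on compactness of $X$ in the same way.
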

\begin{proof}
Take any $\vp\in C(X)$. Then 
$$
I(\phi)(\mu)(\vp)=\mu(\vp\circ\phi)=\max\{\lambda(x)+\vp(\phi(x)):x\in X\}=$$
$$
=\max\{\lambda(x)+\vp(y):y\in\phi(X),\;x\in\phi^{-1}(y)\}=$$
 $$=
\max\{\max\{\lambda(x):x\in\phi^{-1}(y)\}+\vp(y):y\in\phi(X)\}=
$$
$$
=\max\{\lambda_\phi(y)+\vp(y):y\in\phi(X)\}=\max\{\lambda_\phi(y)+\vp(y):y\in {Y}\}
$$
and we get (\ref{filip1}). Now take $x_0\in X$ so that $\lambda(x_0)=1$. Then $\lambda_\phi(y_0)=1$ for $y_0=\phi(x_0)$. Finally, let $(y_n)$ be a sequence convergent to $y$ so that $\lambda_\phi(y_n)\geq \alpha$ for some $\alpha\in\R$. Then we can find a sequence $(x_n)\subset X$ such that $\lambda(x_n)\geq \alpha$ and $\phi(x_n)=y_n$. As $X$ is compact, we can find its convergent subsequence $(x_{n_k})$ to some $x_0\in X$, such that necessarily $\phi(x_0)=y$. Since $\lambda(x_{n_k})\geq \alpha$ and $\lambda$ is usc, it also holds $\lambda_\phi(y)\geq \lambda(x_0)\geq \alpha$. Hence $\lambda_\phi$ is usc.
\end{proof}

\section{Hyperspace of fuzzy sets}\label{sec:Fuzzy sets}
We {now} recall some basic facts on fuzzy sets. Let {$(X,d)$ be a metric or topological space}.
\begin{definition} \emph{We say that $u$ is }a fuzzy subset of $X$\emph{ if $u: X \to [0,1]$. The family of fuzzy subsets of $X$ is denoted by ${\mF(X)}$, that is}
$${\mF(X)}:=\{ u \; | \; u: X \to [0,1]\}.$$
\end{definition}

In this theory \emph{fuzzy set} means that each point $x$ has a grade of membership  $0\leq u(x)\leq 1$ in the set $u$. Here, $u(x)=0$ indicates that $x$ is not in $u$ and $u(x)=0.4$ indicates that $x$ is a member of $u$ with membership degree $0.4$.

\begin{definition}\label{grey level} \emph{Given $\alpha \in (0,1]$ and $u \in \mathcal{F}_{X}$, }the grey level \emph{or} $\alpha$-cut of $u$\emph{ is the set
$$[u]^{\alpha}:=\{x \in X \; | \; u(x)\geq \alpha \},$$
that is, the set of points where the grey level exceeds the threshold value $\alpha$.
For $\alpha=0$ we define
$$[u]^{0}:={\operatorname{supp}(u):=} \overline{\bigcup\{[u]^{\alpha} \; | \;  \alpha >0\}}=\overline{\{x\in X:u(x)>0\}}{.}$$}
\end{definition}

\begin{definition} \emph{A fuzzy set $u \in {\mF(X)}$ is}\\
a) a crisp set\emph{, if $u(x) \in \{0, 1\}$ for every $x\in X$. We identify it with the classic subset $U=\{ x \in X \; |\; u(x)=1 \}$. In this case, $u$ is the indicator function of $U$: $u(x)=\bchi_{U}(x)$};\\
b) normal, \emph{if there is $x \in X$ such that $u(x)=1$;\\
c) }{compactly} supported\emph{ if $[u]^0$ is compact.}
\end{definition}
{Clearly, if $X$ is compact, then all fuzzy sets are compactly supported.}
Actually, the family of subsets of $X$, denoted by $2^X$, can be identified as a subset of ${\mF(X)}$, using the injective map $\chi: 2^X \to  {\mF(X)}$ defined by
$\bchi(B)=\bchi_{B}(x),$
for any $B \in 2^X$.

Fuzzy sets can be induced by maps. In his pioneering work in the 1965 {Zadeh~\cite[p. 346]{Zad}},  introduced what we call \emph{The Extension Principle}, that is a kind of pushforward map between fuzzy subsets.  {It plays an important role in modern science of computation and has been generalized in several ways, first by  \cite{Ngu78} and more recently by \cite{Bzo13}, \cite{Ful14} and many others}.

\begin{definition}(Zadeh's Extension Principle) \emph{Given a map $T: X \to Y$, $ u \in {\mF(X)}$, we define new fuzzy set $T(u)\in {\mF(Y)}$ as follows:
{$$T(u)(y) :=\sup\{u(x):x\in T^{-1}(y)\}$$
where we additionally assume $\sup\varnothing:=0$.}}
\end{definition}
For additional properties of maps and operations between  fuzzy sets see \cite{Zad}.\\

{Finally, define}
$$
{\FK(X)}:=\{u\in\mF_X:u\;\mbox{is usc, normal and compactly supported}\}.
$$
The family ${\FK(X)}$ can be considered as a counterpart of hyperspace ${\K(X)}$ of all nonempty and compact subsets of $X$. We endow it with the metric ${d_{\on{f}}}$ defined by:
$$\forall_{u,v\in{\FK(X)}}\;{d_{\on{f}}} (u,v) := \sup_{\alpha \in [0,1]} h([u]^{\alpha},[v]^{\alpha}),$$
where $h$ is the Hausdorff metric. Note that elements of ${\FK(X)}$ are compactly supported, so all of their $\alpha$-cuts are nonempty and compact, and hence ${d_{\on{f}}}$ is well defined. In fact, we have
\begin{theorem}[\cite{Cab92},\cite{Oli17}] \label{Fuzzy Space is Complete} The function  ${d_{\on{f}}} : {\FK(X)} \times {\FK(X)} \to \R$ is a metric and $({\FK(X)} , {d_{\on{f}}})$ is a complete [compact] metric space provided $(X,d)$ is complete [compact].
\end{theorem}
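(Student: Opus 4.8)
\noindent The plan is to reduce everything to the Hausdorff metric $h$: check the metric axioms by passing to suprema over levels $\alpha$, prove completeness by taking cut‑wise Hausdorff limits and reassembling them into a fuzzy set, and deduce compactness from completeness together with total boundedness. First, $d_{\on{f}}$ is a metric: for $u\in\FK(X)$ and $\alpha\in(0,1]$ the cut $[u]^\alpha$ is nonempty by normality, closed by upper semicontinuity and contained in the compact set $[u]^0$, hence compact, and $[u]^0$ is compact by assumption; thus all cuts lie in $\K(X)$, and since $[u]^\alpha\cup[v]^\alpha\subseteq[u]^0\cup[v]^0$ is bounded, $h([u]^\alpha,[v]^\alpha)$ is finite uniformly in $\alpha$ and $d_{\on{f}}(u,v)<\infty$. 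Symmetry and the triangle inequality are inherited from $h$ by taking suprema over $\alpha$. Finally, if $d_{\on{f}}(u,v)=0$ then $[u]^\alpha=[v]^\alpha$ for all $\alpha\in[0,1]$ since $h$ separates compact sets, and because $u(x)=\sup\{\alpha\in(0,1]:x\in[u]^\alpha\}$ whenever this set is nonempty and $u(x)=0$ otherwise (likewise for $v$), this forces $u=v$.

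\emph{Completeness when $(X,d)$ is complete.} Let $(u_n)$ be $d_{\on{f}}$-Cauchy. For each fixed $\alpha$ the sequence $([u_n]^\alpha)_n$ is $h$-Cauchy in $\K(X)$, because $h([u_n]^\alpha,[u_m]^\alpha)\le d_{\on{f}}(u_n,u_m)$; since $(\K(X),h)$ is complete it converges to a unique $K_\alpha\in\K(X)$, and letting $m\to\infty$ in the Cauchy estimate upgrades this to $\sup_{\alpha\in[0,1]}h([u_n]^\alpha,K_\alpha)\to0$, i.e.\ the convergence is uniform in the level. The family $(K_\alpha)$ is nonincreasing in $\alpha$ (the inclusions $[u_n]^\beta\subseteq[u_n]^\alpha$ for $\alpha\le\beta$ pass to Hausdorff limits) and $K_1\ne\varnothing$. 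The decisive step is that $(K_\alpha)$ is \emph{left-continuous}: $K_\alpha=\bigcap_{\beta<\alpha}K_\beta$ for $\alpha\in(0,1]$, and $K_0=\overline{\bigcup_{\alpha>0}K_\alpha}$. For the first, the inclusion $\subseteq$ is nestedness; conversely, if $x\in\bigcap_{\beta<\alpha}K_\beta$ then, for $n$ large, the uniform bound gives $d(x,[u_n]^\beta)<\varepsilon$ for every $\beta<\alpha$, and since $\big([u_n]^\beta\big)_{\beta<\alpha}$ is a decreasing family of compacta with intersection $[u_n]^\alpha$ one has $d\big(x,[u_n]^\alpha\big)=\sup_{\beta<\alpha}d\big(x,[u_n]^\beta\big)\le\varepsilon$, so $x\in K_\alpha$; the statement about $K_0$ follows by a similar two-step approximation using the uniform bound and $[u_n]^0=\overline{\bigcup_{\alpha>0}[u_n]^\alpha}$.

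\emph{Reassembling the limit.} Set $u(x):=\sup\{\alpha\in[0,1]:x\in K_\alpha\}$, with $\sup\varnothing:=0$. Using left-continuity one checks $[u]^\alpha=K_\alpha$ for every $\alpha\in[0,1]$ (this is the Negoita--Ralescu type recovery of a fuzzy set from a nested, left-continuous family of level sets); hence $u$ is upper semicontinuous (all of its level sets are compact), normal ($[u]^1=K_1\ne\varnothing$) and compactly supported ($[u]^0=K_0\in\K(X)$), so $u\in\FK(X)$, and $d_{\on{f}}(u_n,u)=\sup_{\alpha\in[0,1]}h([u_n]^\alpha,K_\alpha)\to0$. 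The place where I expect the real effort to lie is the left-continuity lemma: mere level-wise Hausdorff convergence does not pass the left-continuity of $\alpha$-cuts to the limit, and the uniformity in the level extracted from the Cauchy condition is exactly what makes it work.

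\emph{Compactness when $(X,d)$ is compact.} Now $\K(X)$ is compact, and since $\FK(X)$ is already complete by the previous part it suffices to prove total boundedness. Given $\varepsilon>0$ the plan would be to fix a finite $\varepsilon$-net $\{C_1,\dots,C_m\}$ of $\K(X)$ together with a finite partition of $[0,1]$, approximate the cuts of an arbitrary $u\in\FK(X)$ at the partition points by net elements, restore nestedness by replacing each approximant with the union of those lying above it, and assemble from this data a fuzzy set $v\in\FK(X)$ that is piecewise constant in $\alpha$ and ranges over a finite list. The main obstacle — which I expect to be the crux of the whole statement — is that, in contrast with the hyperspace $\K(X)$, the modulus of ``left-continuity'' of the cut map $\alpha\mapsto[u]^\alpha$ cannot be controlled uniformly over $\FK(X)$, so no partition of $[0,1]$ fixed in advance will suffice; overcoming this requires a simultaneous control of the levels at which the cuts of $u$ can jump, exploiting the compactness of $X$, and at this point I would follow the analysis of \cite{Cab92,Oli17}.
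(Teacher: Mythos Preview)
The paper does not prove this theorem; it is stated with citations to \cite{Cab92} and \cite{Oli17} and no argument is given, so there is nothing in the paper to compare your attempt against. Your treatment of the metric axioms and of completeness is the standard one and is essentially correct: the decisive step, which you handle properly, is to extract \emph{uniform-in-$\alpha$} convergence of the $\alpha$-cuts from the $d_{\on{f}}$-Cauchy condition and then to use that uniformity to pass left-continuity of the cut map $\alpha\mapsto[u_n]^\alpha$ to the limit family $(K_\alpha)$, after which the Negoita--Ralescu representation rebuilds the limit fuzzy set.

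For compactness, however, the obstruction you flag is not merely the crux of a hard proof: it is fatal. The space $(\FK(X),d_{\on{f}})$ is \emph{not} compact in general, even for compact $X$. On $X=[0,1]$ set $u_n(0)=1$ and $u_n(x)=1-\tfrac{1}{n}$ for $x\in(0,1]$; each $u_n$ is usc, normal and compactly supported, hence lies in $\FK([0,1])$, yet for $n<m$ and any $\alpha\in\big(1-\tfrac{1}{n},\,1-\tfrac{1}{m}\big]$ one has $[u_n]^\alpha=\{0\}$ and $[u_m]^\alpha=[0,1]$, so $d_{\on{f}}(u_n,u_m)=1$ and no subsequence is Cauchy. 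Thus your proposed total-boundedness scheme cannot be salvaged --- a finite partition of $[0,1]$ will never separate these $u_n$ --- and no appeal to \cite{Cab92,Oli17} will fill the gap, since the compactness half of the stated theorem is simply false for the supremum metric $d_{\on{f}}$. (This does not affect the paper's main fixed-point arguments, which rely only on completeness.)
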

In fact, the definition of ${d_{\on{f}}}$ can be simplified:
\begin{lemma}[\cite{Oli17}] \label{f2} In the above frame,
$$\forall_{u,v\in \mF^*_X}\;{d_{\on{f}}}(u,v)=\sup_{\alpha\in(0,1]}h([u]^\alpha,[v]^\alpha).$$
\end{lemma}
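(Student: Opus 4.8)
The plan is to show that the level $\alpha=0$ contributes nothing to the defining supremum, i.e.\ that
$$h([u]^0,[v]^0)\le M,\qquad\text{where }M:=\sup_{\alpha\in(0,1]}h([u]^\alpha,[v]^\alpha).$$
Since $[0,1]=\{0\}\cup(0,1]$, we have $d_{\on{f}}(u,v)=\max\{h([u]^0,[v]^0),\,M\}$, so the displayed inequality immediately gives $d_{\on{f}}(u,v)=M$, which is the claim. Before starting I would record the facts I rely on: because $u,v\in\FK(X)$ are usc, every cut $[u]^\alpha,[v]^\alpha$ with $\alpha>0$ is closed, hence compact (being contained in the compact support $[u]^0$, resp. $[v]^0$); normality gives $[u]^\alpha,[v]^\alpha\ne\varnothing$ for all $\alpha\in[0,1]$; and by Definition~\ref{grey level}, $[u]^0=\overline{\bigcup_{\alpha>0}[u]^\alpha}$ (and likewise for $v$). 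In particular all Hausdorff distances below are between nonempty compact sets, hence finite, and $M<\infty$ since every cut lies in the compact set $[u]^0\cup[v]^0$.

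For the main estimate I would use the symmetric description $h(A,B)=\max\{\sup_{a\in A}d(a,B),\,\sup_{b\in B}d(b,A)\}$; as $M$ is symmetric in $u,v$, it suffices to prove $\sup_{a\in[u]^0}d(a,[v]^0)\le M$. Fix $a\in[u]^0=\overline{\bigcup_{\alpha>0}[u]^\alpha}$ and choose a sequence $a_n\to a$ with $a_n\in[u]^{\alpha_n}$ for some $\alpha_n\in(0,1]$. Then $d(a_n,[v]^{\alpha_n})\le h([u]^{\alpha_n},[v]^{\alpha_n})\le M$, and since $[v]^{\alpha_n}\subseteq[v]^0$ this gives $d(a_n,[v]^0)\le M$. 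Because $x\mapsto d(x,[v]^0)$ is $1$-Lipschitz, letting $n\to\infty$ yields $d(a,[v]^0)\le M$; as $a$ was arbitrary, $\sup_{a\in[u]^0}d(a,[v]^0)\le M$, and symmetrically $\sup_{b\in[v]^0}d(b,[u]^0)\le M$, hence $h([u]^0,[v]^0)\le M$, as required.

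I do not anticipate a genuine obstacle; the only point needing care is the passage from a point of the closure $[u]^0$ to an approximating sequence lying in positive-level cuts, together with the (tacit) use of upper semicontinuity, which is precisely what makes $[u]^{\alpha_n}$ a bona fide cut so that $a_n\in[u]^{\alpha_n}$ is meaningful. If one prefers to avoid choosing sequences, the identical computation can be packaged as: for every $\alpha>0$ one has $[u]^\alpha\subseteq N_M\big([v]^\alpha\big)\subseteq N_M\big([v]^0\big)$, where $N_M(\cdot)$ denotes the closed $M$-neighbourhood; hence $\bigcup_{\alpha>0}[u]^\alpha\subseteq N_M([v]^0)$ and, since $N_M([v]^0)$ is closed, $[u]^0\subseteq N_M([v]^0)$, which together with the symmetric inclusion is exactly $h([u]^0,[v]^0)\le M$.
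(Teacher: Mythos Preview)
The paper does not supply its own proof of this lemma; it simply records the statement with a citation to \cite{Oli17}. Your argument is correct and is the natural one: using $[u]^0=\overline{\bigcup_{\alpha>0}[u]^\alpha}$ together with the inclusion $[u]^\alpha\subseteq N_M([v]^0)$ for every $\alpha>0$ (and the closedness of $N_M([v]^0)$) yields $h([u]^0,[v]^0)\le M$, which is all that is needed. Both the sequential version and the closed-neighbourhood repackaging you give are valid; nothing further is required.
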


\section{Iterated function systems and their fuzzy and idempotent counterparts}
\subsection{Iterated function systems and the Hutchinson-Barnsley theorem}
\begin{definition}\emph{
If $X$ is a metric space and $\phi_1,...,\phi_L:X\to X$ are continuous, then we call $\mS=(X,(\phi_j)_{j=1}^L)$ as} an iterated function system \emph{(IFS for short).\\
Each IFS $\mS=(X,(\phi_j)_{j=1}^L)$ generates the }Hutchinson--Barnsley\emph{ operator $F_\mS:\K(X)\to\K(X)$ defined by
$$
\forall_{K\in\K(X)}\;F_\mS(K):=\bigcup_{j=1}^L\phi_j(K).
$$
A set $A_\mS\in\K(X)$ is called }the attractor \emph{of the IFS $\mS$, if
$$
A_\mS=F_\mS(A_\mS)=\bigcup_{j=1}^L\phi_j(A_\mS)
$$
and for every $K\in\K(X)$, the sequence of iterations $F_\mS^{(n)}(K)\to A_\mS$ w.r.t. the Hausdorff metric.}
\end{definition}

\begin{definition}\emph{We say that $\phi:X\to X$, where $X$ is a metric space, is a }Banach contraction\emph{, if its Lipschitz constant $\on{Lip}(\phi)<1$.\\
We say that $\phi:X\to X$ is a }Matkowski contraction\emph{, if there exists a nondecreasing function $\vp:[0,\infty)\to[0,\infty)$ so that {$\vp^{(n)}(t)\to 0$ for $t>0$ and}
$$
\forall_{x,y\in X}\;d(\phi(x),\phi(y))\leq\vp(d(x,y)).
$$
In this case, the map $\varphi$ is called as a }witness for $\phi$.
\end{definition}
\begin{remark}
\emph{Clearly, each Banach contraction is a Matkowski contraction, whereas it is known that there exist Matkowski contractions which are not Banach contractions. The Matkowski fixed point theorem \cite{MTK} states that each Matkowski contraction $\phi$ on a complete metric space $X$ satisfies the thesis of the Banach fixed point theorem, that is, there exists the unique fixed point $x_*\in X$, which is the limit of every sequence of iterates $(\phi^{(n)}(x))$, $x\in X$. In fact, Matkowski fixed point theorem is one of the strongest extensions of the Banach theorem.
{Finally, note that if $X$ is compact, then Matkowski contractivity is equivalent to so-called Edelstein contractivity, that is, $\phi:X\to X$ is Matkowski contraction { if and only if}
$$
\forall_{x,y\in X,\;x\neq y}\;d(\phi(x),\phi(y))<d(x,y).
$$
} We refer the reader to the paper \cite{JJ07}, in which many contractive conditions are discussed and compared.}
\end{remark}

\begin{definition}\emph{
Let $\mS=(X,(\phi_j)_{j=1}^L)$ be an IFS. We say that $\mS$ is }Matkowski [Banach, \emph{respectively}] contractive,\emph{ if each map $\phi_j$ is a Matkowski contraction [Banach contraction, respectively].}
\end{definition}
\begin{theorem}
Assume that $\mS=(X,(\phi_j)_{j=1}^L)$ is a Matkowski contractive IFS on a complete metric space. Then $\mS$ generates the unique attractor $A_\mS$.
\end{theorem}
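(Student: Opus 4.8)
The plan is to run the classical Hutchinson--Barnsley argument inside the hyperspace $\K(X)$, with the Banach fixed point theorem replaced by the Matkowski fixed point theorem recalled above. It is well known that $(\K(X),h)$ is a complete metric space whenever $(X,d)$ is complete, and that $F_\mS$ maps $\K(X)$ into itself (continuous images of nonempty compact sets are nonempty and compact, and finite unions of compact sets are compact). So the whole problem reduces to showing that $F_\mS\colon\K(X)\to\K(X)$ is a Matkowski contraction; the existence of the attractor, its uniqueness, and the convergence $F_\mS^{(n)}(K)\to A_\mS$ then follow at once from the Matkowski fixed point theorem, with $A_\mS$ being the unique fixed point of $F_\mS$.

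For the contraction estimate, let $\varphi_j$ be a witness for $\phi_j$ and set $\Phi:=\max_{1\le j\le L}\varphi_j$, which is again nondecreasing. First I would show that for a single map, $h(\phi_j(A),\phi_j(B))\le\varphi_j(h(A,B))$: given $a\in A$, compactness of $B$ yields some $b\in B$ with $d(a,b)=d(a,B)\le h(A,B)$, hence $d(\phi_j(a),\phi_j(B))\le d(\phi_j(a),\phi_j(b))\le\varphi_j(d(a,b))\le\varphi_j(h(A,B))$ by monotonicity of $\varphi_j$; taking the supremum over $a\in A$ and arguing symmetrically gives the claim. Combining it with the elementary estimate $h(\bigcup_{j=1}^L E_j,\bigcup_{j=1}^L G_j)\le\max_j h(E_j,G_j)$ for the Hausdorff metric (applied with $E_j=\phi_j(A)$, $G_j=\phi_j(B)$) yields $h(F_\mS(A),F_\mS(B))\le\max_j\varphi_j(h(A,B))=\Phi(h(A,B))$ for all $A,B\in\K(X)$.

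The main obstacle is to check that $\Phi$ is a genuine Matkowski witness, i.e.\ that $\Phi^{(n)}(t)\to 0$ for every $t>0$; this is the only place where more than routine hyperspace bookkeeping is needed. Since each $\varphi_j$ is nondecreasing with $\varphi_j^{(n)}(t)\to 0$, one has $\varphi_j(t)<t$ for all $t>0$ (if $\varphi_j(t_0)\ge t_0$ for some $t_0>0$, monotonicity would force $\varphi_j^{(n)}(t_0)\ge t_0$ for all $n$, a contradiction), and then $\varphi_j(0)\le\varphi_j(t)<t$ for all $t>0$ gives $\varphi_j(0)=0$; hence $\Phi(t)<t$ for $t>0$ and $\Phi(0)=0$. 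Fix $t>0$ and put $t_n:=\Phi^{(n)}(t)$. Then $(t_n)$ is nonincreasing, so $t_n\downarrow\ell$ for some $\ell\ge0$, and I claim $\ell=0$. If $\ell>0$, choose for each $n$ an index $j_n$ with $t_{n+1}=\varphi_{j_n}(t_n)$; by the pigeonhole principle some index, say $j=1$, occurs for infinitely many $n$, say $n_1<n_2<\cdots$. Since $(t_n)$ is nonincreasing and $n_{k+1}\ge n_k+1$, we get $t_{n_{k+1}}\le t_{n_k+1}=\varphi_1(t_{n_k})$, and iterating this bound together with the monotonicity of $\varphi_1$ gives $t_{n_{k+1}}\le\varphi_1^{(k)}(t_{n_1})\to 0$, contradicting $t_{n_{k+1}}\ge\ell>0$. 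Hence $\ell=0$, as required.

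Putting everything together, $F_\mS$ is a Matkowski contraction on the complete metric space $(\K(X),h)$, so the Matkowski fixed point theorem provides a unique $A_\mS\in\K(X)$ with $F_\mS(A_\mS)=A_\mS$ and $F_\mS^{(n)}(K)\to A_\mS$ for every $K\in\K(X)$, which is precisely the statement that $\mS$ generates a unique attractor. I expect the pigeonhole/monotone-chaining step — that a finite maximum of Matkowski witnesses is again a Matkowski witness — to require the most care; everything else is the standard passage to the hyperspace $\K(X)$.
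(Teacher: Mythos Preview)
Your proposal is correct and follows exactly the approach the paper indicates: the paper does not give a detailed proof here but simply remarks (with a reference to \cite{Oli17}) that $F_\mS$ is a Matkowski contraction on $(\K(X),h)$, whence the Matkowski fixed point theorem applies. Your write-up supplies precisely these details, including the witness $\Phi=\max_j\varphi_j$ that the paper later invokes in Theorem~\ref{lem3ff}; the pigeonhole argument showing that $\Phi^{(n)}(t)\to 0$ is sound.
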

The case when $\mS$ is Banach contractive is the statement of the the classical Hutchinson--Barnsley theorem \cite{BAR88}, \cite{HUT}. The general version for Matkowski contractive IFSs is also known (see, a.e., \cite{Oli17}, but also many other papers on extensions of Hutchinson-Barnsley theory) - it can be proved in a similar way as the classical version, as the Hutchinson operator $F_\mS$ turns to be a Matkowski contraction provided that $\mS$ is Matkowski contractive.

\subsection{IFSs on fuzzy sets}\label{sec:IFSs on fuzzy sets}

We say that a system of {maps $(d_j)_{j=1}^{L}: [0,1]\to [0,1]$  is \emph{an admissible system of grey level maps}} if it satisfies all the conditions\\
a) each $d_j$ is nondecreasing; \\
b) each $d_j$ is right continuous;\\
c) for each $j$, we have that $d_j(0)=0$;\\
d) $d_j(1)=1$ for some $j$.

\begin{definition} \label{IFZS definition}
{\emph{If $\mS=(X,(\phi_j)_{j=1}^L)$ is an IFS  and $(d_j)_{j=1}^L$ is an admissible system of grey level maps, then we call the triple $\fS=(X,(\phi_j)_{j=1}^L, (d_{j})_{j=1}^{L})$ as a} fuzzy iterated function system 
\\}\emph{The operator $Z_{\mathcal{S}}:  {\FK(X) \to \FK(X)}$ defined by
$$Z_{\mathcal{S}}({u}):= \bigvee_{j \in \{1,\ldots,L\}} d_j\circ\phi_{j}(u){:=\max\{ d_j\circ\phi_{j}(u):j=1,\ldots,L\}}$$
is called }the fuzzy Hutchinson operator associated to ${\fS}$.\\
\emph{A fuzzy set $u_{\mS}\in {\FK(X)}$  is called }the fuzzy fractal attractor \emph{of ${\fS}$  if $Z_{\mathcal{S}}(u_{\mS})=u_{\mS}$, that is
$${u_{\mS}}= \bigvee_{j \in \{1,\ldots,L\}} d_j\circ\phi_{j}(u_{\mS}),$$
and for every $u\in{\FK(X)}$, the sequence of iterates $(Z^{(n)}_{\mathcal{S}}(u))$ converges to $u_{\mathcal{S}}$ with respect to the metric ${d_{\on{f}}}$.\\
We say that a fuzzy IFS }${\fS}$ is Matkowski [Banach, \emph{respectively}] contractive\emph{, if the underlying IFS ${\mS}$ is Matkowski {[Banach, respectively]} contractive.}
\end{definition}

The following result is a consequence of \cite[{Thmeorem} 3.15]{Oli17} (see {also }\cite{Dia94} for a more restrictive version) and can be considered as a fuzzy version of the Hutchinson--Barnsley theorem.

\begin{theorem}\label{lem3ff}
Let $(X,d)$ be a complete metric space and ${\fS}=(X,(\phi_j)_{j=1}^L, (d_j)_{j=1}^{L})$ be a Matkowski contractive IFZS. Then ${\fS}$ generates the unique fuzzy attractor $u_{\mS}$, {whose support equals $A_\mS$, the attractor of the underlying IFS $\mS$.}\\
In fact, the fuzzy Hutchinson operator $Z_\mS:{\FK(X)\to\FK(X)}$ is a Matkowski contraction {with} a witness $\varphi_\mS:=\max\{\varphi_j:j=1,...,L\}$, where $\varphi_j$s are witnesses for $\phi_j${, $j=1,...,L$}. {In particular, $Z_\mS$ is Banach contraction provided that $\fS$ is Banach contractive and the Lipschitz constant $\on{Lip}(Z_\mS)\leq\max\{\on{Lip}(\phi_j):j=1,...,L\}$.}
\end{theorem}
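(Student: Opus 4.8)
The plan is to derive everything from Theorem~\ref{lem3ff}'s parent result \cite[Theorem 3.15]{Oli17} via the structural identity $[Z_\mS(u)]^\alpha = F_\mS'([u]^\alpha)$ relating the fuzzy Hutchinson operator to a Hutchinson-type operator on $\alpha$-cuts. First I would compute the $\alpha$-cuts of $Z_\mS(u)$: for $\alpha\in(0,1]$,
$$
[Z_\mS(u)]^\alpha = \Big[\bigvee_{j} d_j\circ\phi_j(u)\Big]^\alpha = \bigcup_{j}\big[d_j\circ\phi_j(u)\big]^\alpha,
$$
and since $d_j$ is nondecreasing and right continuous, $[d_j\circ\phi_j(u)]^\alpha = [\phi_j(u)]^{\beta_j(\alpha)}$ where $\beta_j(\alpha)=\inf\{t:d_j(t)\geq\alpha\}$, while Zadeh's extension gives $[\phi_j(u)]^{\beta} = \phi_j([u]^{\beta})$ for $\beta>0$ (using compactness/usc so the sup is attained). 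This shows each $\alpha$-cut of $Z_\mS(u)$ is a finite union of continuous images of $\alpha'$-cuts of $u$ with $\alpha'\geq\alpha$.

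Next I would estimate $d_{\on{f}}(Z_\mS(u),Z_\mS(v))$. Using Lemma~\ref{f2}, it suffices to bound $h([Z_\mS(u)]^\alpha,[Z_\mS(v)]^\alpha)$ over $\alpha\in(0,1]$. By the cut formula above and the standard fact that the Hausdorff distance of finite unions is at most the max of the pairwise Hausdorff distances, together with $h(\phi_j(A),\phi_j(B))\leq\vp_j(h(A,B))$ for Matkowski contractions (this is the key lemma behind the classical Hutchinson--Barnsley theorem in the Matkowski setting), one gets
$$
h([Z_\mS(u)]^\alpha,[Z_\mS(v)]^\alpha)\leq \max_j \vp_j\big(h([u]^{\beta_j(\alpha)},[v]^{\beta_j(\alpha)})\big)\leq \vp_\mS\big(d_{\on{f}}(u,v)\big),
$$
where I use that each $\vp_j$ is nondecreasing, $h([u]^{\beta_j(\alpha)},[v]^{\beta_j(\alpha)})\leq d_{\on{f}}(u,v)$, and $\vp_\mS=\max_j\vp_j$ is itself nondecreasing with $\vp_\mS^{(n)}(t)\to0$ for $t>0$ (a short argument since a finite max of such witnesses is again such a witness). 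Taking the supremum over $\alpha$ yields $d_{\on{f}}(Z_\mS(u),Z_\mS(v))\leq\vp_\mS(d_{\on{f}}(u,v))$, so $Z_\mS$ is a Matkowski contraction on the complete metric space $(\FK(X),d_{\on{f}})$ (Theorem~\ref{Fuzzy Space is Complete}), and the Matkowski fixed point theorem delivers the unique fuzzy attractor $u_\mS$ and the convergence of iterates. The Banach case, with $\on{Lip}(Z_\mS)\leq\max_j\on{Lip}(\phi_j)$, is the specialization $\vp_j(t)=\on{Lip}(\phi_j)\,t$, and one must check $\beta_j(\alpha)>0$ causes no trouble (it does not, since $d_{\on{f}}(u,v)$ already dominates all positive cuts).

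Finally, for the support claim $\supp u_\mS = A_\mS$: taking $\alpha\to0^+$ in the cut identity and using continuity of the $\phi_j$ plus the definition $[u]^0=\overline{\bigcup_{\alpha>0}[u]^\alpha}$, one shows $[Z_\mS(u)]^0 = \overline{\bigcup_j\phi_j([u]^0)} = F_\mS([u]^0)$; hence $u\mapsto[u]^0$ intertwines $Z_\mS$ with the Hutchinson operator $F_\mS$, so $[u_\mS]^0$ is a fixed point of $F_\mS$ in $\K(X)$, and by uniqueness of the attractor $[u_\mS]^0=A_\mS$. The main obstacle I anticipate is the careful handling of the grey-level maps $d_j$ at the threshold: justifying $[d_j\circ\phi_j(u)]^\alpha=[\phi_j(u)]^{\beta_j(\alpha)}$ requires exactly the right-continuity and monotonicity of $d_j$ (so that $d_j(t)\geq\alpha\iff t\geq\beta_j(\alpha)$), and one must confirm the resulting $\beta_j(\alpha)$ stays positive for $\alpha>0$ and that the degenerate case where some cut is empty cannot arise for normal, compactly supported usc fuzzy sets — all of which are essentially bookkeeping but need to be stated cleanly for the estimate to go through uniformly in $\alpha$.
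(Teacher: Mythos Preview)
The paper does not supply its own proof of this theorem: it is stated immediately after the sentence ``The following result is a consequence of \cite[Theorem 3.15]{Oli17}\ldots'' and no proof environment follows. So there is nothing to compare against except the citation itself. Your sketch is precisely the standard argument behind that cited result: compute $\alpha$-cuts of $Z_\mS(u)$ via the identity $[d_j\circ\phi_j(u)]^\alpha=\phi_j([u]^{\beta_j(\alpha)})$ (Nguyen-type cut formula for Zadeh's extension plus right-continuity/monotonicity of the grey-level maps), bound the Hausdorff distance of unions by the maximum of the pieces, and use the Matkowski witness for each $\phi_j$ to obtain $d_{\on{f}}(Z_\mS(u),Z_\mS(v))\leq\vp_\mS(d_{\on{f}}(u,v))$; then invoke completeness of $(\FK(X),d_{\on{f}})$ and the Matkowski fixed point theorem. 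The support identification via intertwining $[\,\cdot\,]^0$ with $F_\mS$ is also the standard route. The bookkeeping issues you flag (positivity of $\beta_j(\alpha)$, empty cuts) are genuine but routine: $d_j(0)=0$ together with right-continuity forces $\beta_j(\alpha)>0$ for $\alpha>0$, and when $d_j$ never reaches level $\alpha$ the corresponding cut is empty for \emph{both} $u$ and $v$, so that index simply drops from the union on both sides. Your proposal is correct and matches what the paper is invoking by reference.
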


\subsection{IFSs on idempotent probabilities}\label{sec:IFSs on idempotent probabilities}
We say that {$(X,(\phi_j)_{j=1}^{L}, (q_j)_{j=1}^{L})$}  is {a \emph{weighted IFS}}, if $(X,(\phi_j)_{j=1}^L)$ is an {IFS and [FILIP: I erased the compactness assumption I think it is not needed]}  $q_j\in \R$ for all $j=1,...,L$. Those IFSs are studied in ergodic theory (see \cite{Elt87}, \cite{BDEG88},\cite{Fan99}, \cite{DJP06} and \cite{Lop09})  and the special cases where  $ 0\leq q_j \leq 1$ and $\sum_{j=1}^{L}q_j=1$ are called IFS with probabilities  and has been largely studied by several authors proving the existence of an invariant probability measure. In \cite{MZ} there was considered the following version in the context of idempotent measures.


\begin{definition}
\emph{{Let $\mS=(X,(\phi_j)_{j=1}^m)$ be an IFS and $(q_j)_{j=1}^L$ is a family of real numbers so that
\begin{itemize}
  \item $ q_j \leq 0$ for $j=1,...,L$ {and, };
  \item $\displaystyle\bigoplus_{j=1,...,L} q_j =0$.
\end{itemize}
Then we call the triple $\mpS=(X,(\phi)_{j=1}^L,(q_j)_{j=1}^L)$ as}} a max-plus normalized IFS.

\emph{{Each max-plus normalized IFS $\mpS=(X,(\phi_j)_{j=1}^L,(q_j)_{j=1}^L)$} generates the map $M_{\mathcal{S}}:I(X)\to I(X)$, called as the} idempotent Markov operator, \emph{which adjust to every $\mu\in I(X)$, the idempotent measure $M_{\mathcal{S}}(\mu)$ defined by:
\begin{equation*}
M_{\mathcal{S}}(\mu):=\bigoplus_{j=1}^Lq_j\odot (I(\phi_j)(\mu))
\end{equation*}
that is, for every {$\psi\in C(X)$,}
\begin{equation*}
M_{\mathcal{S}}(\mu)({\psi})=\bigoplus_{j=1}^{L} q_{j}  \odot \mu({\psi}\circ\phi_j).
\end{equation*}
By  }an invariant idempotent measure \emph{of a {max-plus normalized IFS $\mpS$} we mean  the unique measure $\mu_{\mathcal{S}}\in I(X)$ which satisfies
\begin{equation*}
\mu_{\mathcal{S}}=M_{\mathcal{S}}(\mu_{\mathcal{S}})
\end{equation*}
and such that for every $\mu\in I(X)$, the sequence of iterates $M^{(n)}_{\mathcal{S}}(\mu)$ converges to $\mu_{\mathcal{S}}$ with respect to the $\tau_p$ topology on $I(X)$.\\
We say that a max-plus{ normalized IFS $\mpS$ is}} Matkowski [Banach, \emph{respectively}] contractive, \emph{if the underlying IFS ${\mS}$ is Matkowski [Banach, respectively] contractive.}
\end{definition}

The main result of \cite{MZ}, that is \cite[Theorem 1]{MZ}, states that:
\begin{theorem}
Each Banach contractive {max-plus normalized IFS $\mpS$} on a complete metric space generates the unique invariant idempotent measure $\mu_{\mathcal{S}}$.
\end{theorem}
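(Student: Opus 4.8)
The plan is to transport the entire statement to the setting of fuzzy sets, where Theorem~\ref{lem3ff} already supplies the fixed point. (We take $X$ compact, as in Section~\ref{sec:set_prob_measures_I(X)}; the general complete-metric-space case reduces to this by restricting $\mpS$ to a compact $\phi_j$-invariant set containing $A_\mS$, e.g.\ the closure of $\{x_0\}\cup\bigcup_{n\ge1}F_\mS^{(n)}(\{x_0\})$ for any $x_0$.) Using Lemma~\ref{fil6}, I would define $\Theta\colon I(X)\to\FK(X)$ by $\Theta(\mu):=\exp\circ\,\lambda_\mu$, with the convention $\exp(-\infty):=0$, where $\lambda_\mu$ denotes the density of $\mu$. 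Since $t\mapsto e^{t}$ is an increasing homeomorphism $[-\infty,0]\to[0,1]$, $\Theta(\mu)$ is usc, $[0,1]$-valued, equal to $1$ where $\lambda_\mu$ vanishes (hence normal), and compactly supported (as $X$ is compact), so $\Theta(\mu)\in\FK(X)$; conversely $u\mapsto\bigoplus_{x\in X}(\log u(x))\odot\delta_x$ carries $\FK(X)$ back into $I(X)$ by Lemma~\ref{fil6}(2) and inverts $\Theta$. Thus $\Theta$ is a bijection.

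To $\mpS=(X,(\phi_j)_{j=1}^L,(q_j)_{j=1}^L)$ I would attach the fuzzy IFS $\fS=(X,(\phi_j)_{j=1}^L,(d_j)_{j=1}^L)$ with grey-level maps $d_j(t):=e^{q_j}t$. Each $d_j$ is nondecreasing and continuous, $d_j(0)=0$, and $d_{j_0}(1)=1$ for the index $j_0$ with $q_{j_0}=0$, so $(d_j)$ is an admissible system (Definition~\ref{IFZS definition}) and $\fS$ is Banach contractive because $\mS$ is. The \emph{key step} is the conjugation
\[
\Theta\circ\IMS=\FMS\circ\Theta .
\]
To prove it, fix $\mu$ with density $\lambda$. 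By Lemma~\ref{fil5}, $I(\phi_j)(\mu)$ has density $y\mapsto\max\{\lambda(x):x\in\phi_j^{-1}(y)\}$ (with $\max\varnothing=-\infty$), hence $q_j\odot I(\phi_j)(\mu)$ has density $y\mapsto q_j+\max\{\lambda(x):x\in\phi_j^{-1}(y)\}$; and since the density of a finite $\bigoplus$ is the pointwise maximum of the densities (which attains $0$ on the $j_0$-summand --- this is where $\bigoplus_j q_j=0$ is used, and it also certifies $\IMS(\mu)\in I(X)$), the measure $\IMS(\mu)$ has density $y\mapsto\max_j\big(q_j+\max\{\lambda(x):x\in\phi_j^{-1}(y)\}\big)$. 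Applying $\exp$ --- which is increasing, turns sums into products, commutes with suprema, and sends $-\infty$ to $0=\sup\varnothing$ --- this equals $\max_j e^{q_j}\sup\{\Theta(\mu)(x):x\in\phi_j^{-1}(y)\}=\max_j d_j\big(\phi_j(\Theta(\mu))(y)\big)=\FMS(\Theta(\mu))(y)$ by Zadeh's extension principle. The same chain shows $\FMS$ preserves $\FK(X)$.

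For the metric, put $d_\theta(\mu,\nu):=d_{\on{f}}(\Theta(\mu),\Theta(\nu))$; I would verify that its topology refines $\tau_p$, so that $d_\theta$-convergence forces $\tau_p$-convergence. Indeed the $\alpha$-cut of $\Theta(\mu)$ is the superlevel set $\{x:\lambda_\mu(x)\ge\log\alpha\}$, and $\mu(\vp)=\max_x(\lambda_\mu(x)+\vp(x))$ is attained ($\lambda_\mu$ is usc and $X$ compact); comparing a maximiser $x^\ast$ of $\lambda_\mu+\vp$ with a point of the matching cut of $\Theta(\mu_n)$ at distance $\le d_\theta(\mu_n,\mu)$ from $x^\ast$, and symmetrically, gives $|\mu_n(\vp)-\mu(\vp)|\le\omega_\vp\big(d_\theta(\mu_n,\mu)\big)$ for every $\vp\in C(X)$, where $\omega_\vp$ is a modulus of continuity of $\vp$. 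This also shows $d_\theta$ is a genuine metric on $I(X)$.

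Finally, Theorem~\ref{lem3ff} applied to $\fS$ yields a unique $u_\mS\in\FK(X)$ with $\FMS(u_\mS)=u_\mS$ and $\FMS^{(n)}(u)\to u_\mS$ w.r.t.\ $d_{\on{f}}$ for every $u$. Set $\mu_\mS:=\Theta^{-1}(u_\mS)$. Since $\Theta$ is bijective and $\IMS^{(n)}=\Theta^{-1}\circ\FMS^{(n)}\circ\Theta$, the measure $\mu_\mS$ is the unique fixed point of $\IMS$ in $I(X)$, and for any $\mu\in I(X)$ one has $\IMS^{(n)}(\mu)=\Theta^{-1}\big(\FMS^{(n)}(\Theta(\mu))\big)\to\mu_\mS$ in $d_\theta$, hence in $\tau_p$ by the previous paragraph; so $\mu_\mS$ is the invariant idempotent measure of $\mpS$ (and, as a bonus, $\supp\mu_\mS=[u_\mS]^0=A_\mS$). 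I expect the main obstacle to be the conjugation identity: reconciling the pushforward/density bookkeeping of Lemma~\ref{fil5} --- the $\max\varnothing=-\infty$ convention and the action of $\bigoplus$ on densities --- with Zadeh's extension and the maps $d_j$, together with checking that $\IMS$ and $\FMS$ genuinely preserve $I(X)$ and $\FK(X)$; the refinement of $\tau_p$ by the $d_\theta$-topology is the other spot requiring honest estimates, but is routine given compactness.
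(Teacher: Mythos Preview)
Your proposal is correct and follows essentially the same route as the paper: the paper builds exactly the bijection $\Theta(\mu)=\theta\circ\lambda_\mu$ for a general scale homeomorphism $\theta:[-\infty,0]\to[0,1]$ (Lemma~\ref{lem:theta_corresp}), defines the corresponding fuzzy IFS with $d_j(t)=\theta(q_j+\theta^{-1}(t))$, proves the conjugation $\Theta\circ M_\mS=Z_\mS\circ\Theta$ via the density computation of Lemma~\ref{fil23} (your argument with Lemma~\ref{fil5}), introduces $d_\theta$ by transport of $d_{\on{f}}$, shows $\tau_\theta\supset\tau_p$ by the same maximiser/cut comparison (Proposition~\ref{fil11}), and then invokes Theorem~\ref{lem3ff}. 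Your choice $\theta=\exp$ is just a particular scale function (so $d_j(t)=e^{q_j}t$ coincides with the paper's $d_j$), and your modulus-of-continuity packaging of the $\tau_p$-refinement is a cosmetic sharpening of the same estimate.
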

As we mentioned in the introduction, the proof presented in \cite{MZ} is rather topological and does not use the fixed point theorem. Moreover, it is worth to note that despite it is stated for complete metric spaces, it is proved for compact spaces - in a simple way we can restrict the discussion from complete to compact spaces (see the beginning of \cite[Theorem 1]{MZ}). Thus also our results, { stated for compact spaces,} presented later{,} can be adjusted to complete spaces. Note that we obtain the thesis for Matkowski {contractive} contarctive {max-plus normalized} IFSs, so {it is} in fact an extension of the above theorem. On the other hand, it seems that the proof presented in \cite{MZ} can be rewritten for Matkowski contractive {IFSs}.

\section{Canonical bijections between idempotent measures and fuzzy sets and conjugation of idempotent Markov and fuzzy Hutchinon--Barnsley operators}\label{sec:The fuzzy correspondence principle}
{From now on, we assume that $(X,d)$ is a \textbf{compact} metric space}.\\
In this section we show that there is a natural correspondence between the space of idempotent measures $I(X)$ and the space ${\FK(X)}$  of fuzzy ``compact" sets (we assume that $X$ is a compact metric space).

Recall that {the set of densities is}
$$U_{S}(X)=\{ \lambda: X \to [-\infty,\, 0]\, | \, \lambda\text{ is usc and there exists some }x_0 \in X, \lambda(x_0)=0\},$$
{and the family of all idempotent measures is then}
$$I(X)= \left\{\bigoplus_{x \in X}  \lambda(x)\odot \delta_{x} \; | \; \lambda \in U_{S}(X)\right\}.$$

Hence {densities of idempotent probability measures} are very much like fuzzy sets {except the fact that their ranges are} $[-\infty, \, 0]$ instead $[0, \, 1]$.

In order to further investigate this analogy {we} {we call any increasing homeomorphism $\theta: [-\infty, \, 0] \to [0, \, 1]$ as a \emph{ scale function}.}
{
\begin{example}
The maps $\theta_1(t)= \frac{1}{1+t^2}, \; t \in [-\infty, \, 0]$, as well as $\theta_2(t)= \frac{2}{\pi}\left(\frac{\pi}{2} + \arctan(t)\right), \; t \in [-\infty, \, 0]$ and $\theta_3(t)= a^t, \; t \in [-\infty, \, 0],\; a \in (1, +\infty)$ serve as examples of scale functions (the values of $\theta_i(-\infty)$ are appropriate limits at $-\infty$).
\end{example}}

{For any scale function $\theta$, define} the map $\Theta: I(X) \to {\mathcal{F}(X)}$
 by
  $$\Theta(\mu):= \theta\circ \lambda_{\mu} ,$$
for any $\mu \in I(X)$, where $ \lambda_{\mu} $ is the density of {$\mu$.}

\begin{lemma}\label{lem:theta_corresp}
The map {$\Theta$ is well defined and is a bijection between $I(X)$ and  $\FK(X)$}. Moreover, $\on{supp}(\mu)=\on{supp}(\Theta(\mu))$ for every $\mu\in I(X)$.
\end{lemma}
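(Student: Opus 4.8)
The plan is to verify, in turn: (i) that $\Theta$ maps $I(X)$ into $\FK(X)$, (ii) that $\Theta$ is injective, (iii) that $\Theta$ is surjective, and (iv) the support identity. For (i), fix $\mu\in I(X)$ with density $\lambda_\mu\in U_S(X)$. Since $\theta:[-\infty,0]\to[0,1]$ is an increasing homeomorphism, $\theta$ is continuous and monotone, hence the composition $\theta\circ\lambda_\mu$ is upper semicontinuous (a continuous nondecreasing function of a usc function is usc, using that $\theta^{-1}$ carries $[\alpha,1]$ to a set of the form $[\theta^{-1}(\alpha),0]$, so the superlevel sets of $\theta\circ\lambda_\mu$ are exactly superlevel sets of $\lambda_\mu$, which are closed). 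Its range lies in $[0,1]$, so it is a fuzzy subset of $X$. By Lemma~\ref{fil6}(1ii) there is $x_0$ with $\lambda_\mu(x_0)=0$, whence $(\theta\circ\lambda_\mu)(x_0)=\theta(0)=1$, so $\Theta(\mu)$ is normal. Finally $X$ is compact, so $[\Theta(\mu)]^0$ is closed in a compact space, hence compact; thus $\Theta(\mu)\in\FK(X)$ and $\Theta$ is well defined.

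For injectivity, suppose $\Theta(\mu)=\Theta(\nu)$, i.e. $\theta\circ\lambda_\mu=\theta\circ\lambda_\nu$. Since $\theta$ is a bijection we may apply $\theta^{-1}$ to conclude $\lambda_\mu=\lambda_\nu$, and then by Lemma~\ref{fil6}(1iii) and (1iv) the measures coincide: $\mu=\bigoplus_{x\in X}\lambda_\mu(x)\odot\delta_x=\bigoplus_{x\in X}\lambda_\nu(x)\odot\delta_x=\nu$. For surjectivity, take any $u\in\FK(X)$ and set $\lambda:=\theta^{-1}\circ u:X\to[-\infty,0]$. As before, $\theta^{-1}$ is a continuous increasing bijection $[0,1]\to[-\infty,0]$, so $\lambda$ is usc (same superlevel-set argument); normality of $u$ gives $x_0$ with $u(x_0)=1$, hence $\lambda(x_0)=\theta^{-1}(1)=0$; thus $\lambda\in U_S(X)$. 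By Lemma~\ref{fil6}(2) the functional $\mu_\lambda:=\bigoplus_{x\in X}\lambda(x)\odot\delta_x$ lies in $I(X)$, and by uniqueness of densities (Lemma~\ref{fil6}(1iv)) its density is exactly $\lambda$, so $\Theta(\mu_\lambda)=\theta\circ\lambda=\theta\circ\theta^{-1}\circ u=u$. Hence $\Theta$ is onto, and combined with injectivity it is a bijection.

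For the support identity, recall $\operatorname{supp}\mu=\overline{\{x:\lambda_\mu(x)>-\infty\}}$ while $\operatorname{supp}(\Theta(\mu))=[\Theta(\mu)]^0=\overline{\{x:(\theta\circ\lambda_\mu)(x)>0\}}$. Because $\theta$ is an increasing homeomorphism onto $[0,1]$, we have $\theta(t)>0$ if and only if $t>\theta^{-1}(0)=-\infty$, i.e. $\theta(t)>0\iff t>-\infty$ (here using $\theta(-\infty)=0$, which holds since $\theta$ is an order isomorphism taking the minimum to the minimum). Therefore $\{x:(\theta\circ\lambda_\mu)(x)>0\}=\{x:\lambda_\mu(x)>-\infty\}$, and taking closures gives $\operatorname{supp}(\Theta(\mu))=\operatorname{supp}\mu$.

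The only genuinely delicate point is the preservation of upper semicontinuity under composition with $\theta$ (and $\theta^{-1}$); everything else is a direct translation through the bijection $\theta$ using Lemma~\ref{fil6}. The cleanest way to handle it is to note that for a nondecreasing continuous bijection $\theta$, the superlevel sets satisfy $\{x:(\theta\circ\lambda)(x)\ge\alpha\}=\{x:\lambda(x)\ge\theta^{-1}(\alpha)\}$ for $\alpha\in(0,1]$, so usc of $\theta\circ\lambda$ is literally equivalent to usc of $\lambda$; I would state this as a one-line observation and invoke it in both (i) and (iii).
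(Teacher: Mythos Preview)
Your proof is correct and follows essentially the same approach as the paper's. You are in fact more thorough: the paper omits an explicit injectivity argument and simply asserts that continuity of $\theta$ suffices for upper semicontinuity of $\theta\circ\lambda_\mu$, whereas you correctly observe that monotonicity of $\theta$ is also needed and justify it cleanly via superlevel sets.
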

\begin{proof}
   {Fix any $\mu\in I(X)$. Then $\Theta(\mu)$ is well defined because the density is uniquely determined by idempotent probability measure. Since $X$ is a compact space, $\Theta(\mu)$ is compactly supported. Since $\lambda_\mu$ is usc and $\theta$ is continuous, $\Theta(\mu)$ is usc. Finally, since $ \lambda_{\mu} (x_0)=0$ for some $x_0\in X$, we also have that $\theta( \lambda_{\mu} (x_0))=\theta(0)=1$ meaning that  $\Theta(\mu)$ is normal}. Hence $\Theta:I(X)\to{\FK(X)}$.

To see that $\Theta$ is onto, take any $u\in{\FK(X)}$. Then $u=\theta\circ (\theta^{-1}\circ u)$, and $\theta^{-1}\circ u$ is usc and $\theta^{-1}\circ u(x_0)=0$ for $x_0$ with $u(x_0)=1$. Hence $\theta^{-1}\circ u$ is the density of some idempotent measure $\mu\in I(X)$ and $u=\Theta(\mu)$.\\
Recall that ${{\operatorname{supp}(u)}:=[u]^0:=}\overline{\{x\in X:u(x)>0\}}.$ Taking $u=\theta\circ\lambda_{\mu},$ we can see that $u(x)>0$ if, and only if,  $ \lambda_{\mu} (x)> -\infty$. Therefore, $\operatorname{supp}(u)= \operatorname{supp}(\mu)$.
\end{proof}
\begin{definition}\emph{
Assume that ${\mpS}=(X,(\phi_j)_{j=1}^L,(q_j)_{j=1}^L)$ is a max-plus normalized IFS. For every $j=1,...,L$, let $d_j:[0,1]\to[0,1]$ be defined by
$$
\forall_{t\in[0,1]}\;d_j(t):=\theta(q_j+\theta^{-1}(t)){.}
$$
Then the fuzzy IFS ${\fS}:=(X,(\phi_j)_{j=1}^L,d_j)_{j=1}^L)$ is called as }the corresponding {fuzzy IFS} for ${\mpS}$.
\end{definition}
Note that the definition of the corresponding {fuzzy IFS} is correct - by definition of $\theta$ and the assumptions on weights $(q_i)$, we see that $(d_j)$ is an admissible system of grey level maps (in fact, maps $d_j$ are even continuous).\\
The next result is crucial for our approach. It shows that idempotent Markov operator of ${\mpS}$ and the fuzzy Hutchinson--Barnsley operator of ${\fS}$ are conjugated.
\begin{theorem}\label{fil7}
Let {$\mpS$ be a max-plus normalized IFS and $\fS$} be its fuzzy correspondence. Then $M_{\mathcal{S}}$ and ${Z_{\mS}}$ are conjugated {via $\Theta$}, that is,
$$
\Theta\circ M_\mS={Z_{\mS}}\circ\Theta
$$
\end{theorem}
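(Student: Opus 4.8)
The plan is to verify the identity pointwise, i.e., to show that the two fuzzy sets $\Theta(M_{\mathcal{S}}(\mu))$ and $Z_{\mathcal{S}}(\Theta(\mu))$ agree as functions $X\to[0,1]$ for every $\mu\in I(X)$. Fix $\mu\in I(X)$ and write $\lambda:=\lambda_\mu$ for its density, so that $\mu(\psi)=\max\{\lambda(x)+\psi(x):x\in X\}$ for all $\psi\in C(X)$ by Lemma~\ref{fil6}(1iii), and $\Theta(\mu)=\theta\circ\lambda$.

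First I would compute the density of $M_{\mathcal{S}}(\mu)$. By Lemma~\ref{fil5}, each pushforward $I(\phi_j)(\mu)$ has density $\lambda_{\phi_j}(y)=\max\{\lambda(x):x\in\phi_j^{-1}(y)\}$, hence $I(\phi_j)(\mu)(\psi)=\max_{y\in X}(\lambda_{\phi_j}(y)+\psi(y))$ and $(q_j\odot I(\phi_j)(\mu))(\psi)=\max_{y\in X}(q_j+\lambda_{\phi_j}(y)+\psi(y))$. Taking $\bigoplus_{j=1}^L$ turns the outer operation into $\max_j$, so for every $\psi\in C(X)$,
$$
M_{\mathcal{S}}(\mu)(\psi)=\max_{y\in X}\bigl(\Lambda(y)+\psi(y)\bigr),\qquad \Lambda(y):=\max_{1\le j\le L}\bigl(q_j+\lambda_{\phi_j}(y)\bigr).
$$
A short check shows $\Lambda\in U_S(X)$: it is usc as a finite maximum of the usc maps $q_j+\lambda_{\phi_j}$ (usc by Lemma~\ref{fil5}); it takes values in $[-\infty,0]$ because each $q_j\le 0$ and $\lambda\le 0$; and it attains the value $0$ at $\phi_{j_0}(x_0)$, where $q_{j_0}=0$ and $\lambda(x_0)=0$. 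Then Lemma~\ref{fil6}(2) applied to $\Lambda$, together with the uniqueness of densities (Lemma~\ref{fil6}(1iv)), identifies $\Lambda$ as the density of $M_{\mathcal{S}}(\mu)$, so $\Theta(M_{\mathcal{S}}(\mu))=\theta\circ\Lambda$.

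Next I would compute $Z_{\mathcal{S}}(\Theta(\mu))$ directly from the definitions. Since $\Theta(\mu)=\theta\circ\lambda$ and $\theta$ is an increasing homeomorphism $[-\infty,0]\to[0,1]$, Zadeh's extension principle gives, for each $j$ and $y\in X$,
$$
\bigl(\phi_j(\Theta(\mu))\bigr)(y)=\sup\{\theta(\lambda(x)):x\in\phi_j^{-1}(y)\}=\theta\bigl(\lambda_{\phi_j}(y)\bigr),
$$
where pulling $\theta$ out of the supremum is legitimate because $\theta$ is continuous and increasing (so $\theta(\sup A)=\sup\theta(A)$ for any $A\subseteq[-\infty,0]$), and the two empty‑set conventions match since $\theta(-\infty)=0=\sup\varnothing$. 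Using $d_j(t)=\theta(q_j+\theta^{-1}(t))$ and $\theta^{-1}\circ\theta=\mathrm{id}$ we get $\bigl(d_j\circ\phi_j(\Theta(\mu))\bigr)(y)=\theta(q_j+\lambda_{\phi_j}(y))$, and taking the maximum over $j$, once more using that $\theta$ is increasing,
$$
Z_{\mathcal{S}}(\Theta(\mu))(y)=\max_{1\le j\le L}\theta\bigl(q_j+\lambda_{\phi_j}(y)\bigr)=\theta\Bigl(\max_{1\le j\le L}\bigl(q_j+\lambda_{\phi_j}(y)\bigr)\Bigr)=\theta(\Lambda(y)).
$$
Comparing with the previous paragraph, $Z_{\mathcal{S}}(\Theta(\mu))=\theta\circ\Lambda=\Theta(M_{\mathcal{S}}(\mu))$ for every $\mu\in I(X)$, which is exactly $\Theta\circ M_{\mathcal{S}}=Z_{\mathcal{S}}\circ\Theta$.

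I do not expect a serious obstacle here; the proof is essentially an unwinding of definitions combined with Lemmas~\ref{fil5} and~\ref{fil6}. The only point requiring care is the bookkeeping around the two incompatible‑looking empty‑set conventions ($\max\varnothing=-\infty$ for densities versus $\sup\varnothing=0$ for the Zadeh extension) and the fact that $\theta$ commutes with suprema; both are handled uniformly by the observation that $\theta$ is a continuous increasing bijection with $\theta(-\infty)=0$. A secondary routine point is confirming that the function $\Lambda$ built in the first step genuinely lies in $U_S(X)$, so that Lemma~\ref{fil6} can be invoked to recognise it as the density of $M_{\mathcal{S}}(\mu)$.
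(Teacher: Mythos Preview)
Your proof is correct and follows essentially the same approach as the paper: both compute the density $\Lambda$ of $M_{\mathcal{S}}(\mu)$ (the paper isolates this as Lemma~\ref{fil23}, you derive it inline) and then unwind the definitions of $\Theta$, $d_j$, and the Zadeh extension to identify both sides with $\theta\circ\Lambda$. Your handling of the empty-preimage case via the matching conventions $\theta(-\infty)=0=\sup\varnothing$ is slightly more uniform than the paper's explicit case split on whether $y\in\bigcup_j\phi_j(X)$, but the underlying argument is the same.
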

We precede the proof by the following lemma:
\begin{lemma}\label{fil23}
For $\mu=\bigoplus_{x\in X}\lambda(x)\odot\delta_x\in I(X)$, we have that
$$
M_{\mathcal{S}}(\mu)=\bigoplus_{y\in X}\lambda_\mS(y)\odot\delta_y
$$
where{
$$
\lambda_\mS(y)=\max\{q_j+\lambda_{\phi_j}(y):j=1,...,L\}=\max\{q_j+\lambda(x):j=1,...,L,\;x\in\phi^{-1}_j(y)\}
$$
and we additionally assume $\max\varnothing=-\infty$.

Additionally, $\lambda_\mS\in U_S(X)$, so it is the density of $M_\mS$.}
\end{lemma}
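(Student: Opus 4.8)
The plan is to reduce the statement to Lemma~\ref{fil5} applied to each map $\phi_j$ separately, and then to flatten the resulting finitely many nested maxima.

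First I would fix $\psi\in C(X)$, unfold the definition of the idempotent Markov operator, and substitute the description of $I(\phi_j)(\mu)$ from Lemma~\ref{fil5}. Writing $\lambda_{\phi_j}(y)=\max\{\lambda(x):x\in\phi_j^{-1}(y)\}$ (with the convention $\max\varnothing:=-\infty$), one gets
$$
M_\mS(\mu)(\psi)=\bigoplus_{j=1}^{L}q_j\odot\mu(\psi\circ\phi_j)
=\max_{1\le j\le L}\Bigl(q_j+\max_{y\in X}\bigl(\lambda_{\phi_j}(y)+\psi(y)\bigr)\Bigr)
=\max_{y\in X}\Bigl(\max_{1\le j\le L}\bigl(q_j+\lambda_{\phi_j}(y)\bigr)+\psi(y)\Bigr),
$$
where in the last step I use associativity and commutativity of $\max$ to interchange the two finite maxima. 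The inner maximum is exactly $\lambda_\mS(y)$, so $M_\mS(\mu)(\psi)=\max_{y\in X}\bigl(\lambda_\mS(y)+\psi(y)\bigr)$, which is formula~(\ref{fil1}) for $\bigoplus_{y\in X}\lambda_\mS(y)\odot\delta_y$; hence $M_\mS(\mu)=\bigoplus_{y\in X}\lambda_\mS(y)\odot\delta_y$. The second expression for $\lambda_\mS$ then follows by substituting the definition of $\lambda_{\phi_j}$ once more and again flattening: $\max_j\bigl(q_j+\max\{\lambda(x):x\in\phi_j^{-1}(y)\}\bigr)=\max\{q_j+\lambda(x):1\le j\le L,\ x\in\phi_j^{-1}(y)\}$.

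Next I would verify $\lambda_\mS\in U_S(X)$. Its values lie in $[-\infty,0]$ because $q_j\le 0$ for all $j$ and $\lambda\le 0$. By Lemma~\ref{fil5} each $\lambda_{\phi_j}$ is usc, hence so is $y\mapsto q_j+\lambda_{\phi_j}(y)$, and a maximum of finitely many usc functions is usc; thus $\lambda_\mS$ is usc. For the normalization point, pick $x_0\in X$ with $\lambda(x_0)=0$ (it exists since $\lambda\in U_S(X)$) and an index $j_0$ with $q_{j_0}=0$ (it exists since all $q_j\le 0$ and $\bigoplus_j q_j=\max_j q_j=0$); then for $y_0:=\phi_{j_0}(x_0)$ we get $\lambda_\mS(y_0)\ge q_{j_0}+\lambda(x_0)=0$, and together with $\lambda_\mS\le 0$ this forces $\lambda_\mS(y_0)=0$. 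Hence $\lambda_\mS\in U_S(X)$, and by the uniqueness of densities (Lemma~\ref{fil6}) it is the density of $M_\mS(\mu)$.

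There is no genuinely hard step here; the only points requiring care are the bookkeeping around the convention $\max\varnothing=-\infty$ (so that points $y\notin\phi_j(X)$ contribute nothing), the legitimacy of interchanging the two finite maxima, and the standard fact that a finite maximum of usc functions is usc. Everything else is a direct substitution using Lemma~\ref{fil5} and the definition of $M_\mS$.
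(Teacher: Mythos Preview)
Your proposal is correct and follows essentially the same route as the paper: both fix a test function, invoke Lemma~\ref{fil5} for each $I(\phi_j)(\mu)$, and swap the two finite maxima to identify $\lambda_\mS$. Your verification that $\lambda_\mS\in U_S(X)$ is in fact more explicit than the paper's, which simply asserts that this follows from each $\lambda_{\phi_j}\in U_S(X)$; your choice of $j_0$ with $q_{j_0}=0$ to locate the normalization point is exactly the missing detail behind that assertion.
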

\begin{proof}
We will use Lemma \ref{fil5}. {For any $\vp\in C(X)$, we have}
$$
M_{\mathcal{S}}(\mu)(\vp)=\bigoplus_{j=1}^Lq_j\odot (I(\phi_j)(\mu)(\vp))=\bigoplus_{j=1}^Lq_j\odot\Big(\bigoplus_{y\in {X}}\lambda_{\phi_j}(y)\odot \vp(y)\Big)=
$${
$$
\max\{\max\{q_j+\lambda_{\phi_j}(y):j=1,...,L\}+\vp(y):y\in X\}=\bigoplus_{y\in X}(\lambda_\mS(y)\odot \phi(y)){.}
$$
The fact that $\lambda_{\mS}\in U_S(X)$ follows from the fact that each $\lambda_{\phi_j}\in U_S(X)$.}
\end{proof}\\
\begin{proof}(of Theorem \ref{fil7})\\
Take any $\mu=\bigoplus_{x\in X}\lambda(x)\odot\delta_x$ and $\vp\in C(X)$. In view of earlier lemma, we have
$$
(\Theta\circ M_\mS)(\mu)=\Theta(M_\mS(\mu))=\theta\circ \lambda_\mS
$$
so for any $y\in {\bigcup_{j=1}^L\phi_j(X)}$, we have by continuity and monotonicity of $\theta$, that (recall that $\theta(-\infty)=0$) 
$$
(\Theta\circ M_\mS)(\mu)(y)=\theta(\lambda_\mS(y))=\theta(\max\{q_j+\lambda(x):j=1,...,L,\;x\in\phi^{-1}_j(y)\})=
$${
$$
=\max\{\theta(q_j+\lambda(x)):j=1,...,L,\;\mbox{with}\;x\in\phi_j^{-1}(y)\}=
$$
$$
=\max\{\theta(q_j+\theta^{-1}(\theta(\lambda(x))):j=1,...,L,\;\mbox{with}\;\phi_j^{-1}(y)\neq\varnothing\mbox{ and }x\in\phi_j^{-1}(y)\}=
$$
$$
=\max\{d_j(\theta(\lambda(x))):j=1,...,L,\;\mbox{with}\;\phi_j^{-1}(y)\neq\varnothing\mbox{ and }x\in\phi_j^{-1}(y)\}=
$$
$$
=\max\{d_j(\max\{(\theta\circ\lambda)(x):x\in \phi_j^{-1}(y)\}):j=1,...,L,\;\mbox{with}\;\phi_j^{-1}(y)\neq\varnothing\}=
$$
$$
=\max\{(d_j\circ \phi_j(\theta\circ\lambda))(y):j=1,...,L,\;\mbox{with}\;
\phi_j^{-1}(y)\neq\varnothing\}
=Z_\mS(\theta\circ\lambda)(y)=Z_\mS\circ\Theta(\mu)(y){.}
$$
If $y\notin\bigcup_{j=1}^L\phi_j(X)$, then
$$(\Theta\circ M_\mS)(\mu)(y)=\theta(\lambda_\mS(y))=0=\max\{d_j(0):j=1,...,L\}=$$ $$=\max\{d_j( \phi_j(\theta\circ\lambda)(y)):j=1,...,L\}=(Z_\mS\circ\Theta)(\mu)(y){.}$$
Since we considered all $y\in X$, we have}
$$
(\Theta\circ M_\mS)(\mu)=(Z_\mS\circ\Theta)(\mu).
$$
Finally, since $\mu$ was taken arbitrarily, we arrive to the assertion $\Theta\circ M_\mS=Z_\mS\circ\Theta.$
\end{proof}

\begin{remark}\emph{
   Consider any $\mu \in I(X)$, {continuous} $\phi:X \to X$ and $u=\Theta(\mu) \in{\FK(X)}$. Then {using Theorem \ref{fil7} for $L=1$ and $q_1=0$, we see that
   $\Theta(I(\phi)(\mu))=\phi(\Theta(\mu))$}
   meaning that the correspondence $\Theta$ conjugates the Zadeh's extension principle and the max-plus pushforward { operator $I$.}}
\end{remark}

Now we investigate the properties of {the topology $\tau_\theta$} induced from {$\FK(X)$} via the map $\Theta$. Let ${d_{\theta}}$ be the metric on $I(X)$ defined by
$$d_{\theta}(\mu, \nu)= {d_{\on{f}}}(\Theta(\mu), \Theta(\nu)).$$
Clearly, $d_\theta$ is a metric (recall that $\Theta$ is bijection) and the spaces $(I(X),d_\theta)$ and $({\FK(X)},{d_{\on{f}}})$ are homeomorphic {and $\Theta$ is a homeomorphism}.

{ We point out that there exists other approaches on the literature where a {fuzzy metric} is introduced on $I(X)$ {which gives a way to introduce also a metric, see e.g., \cite{BSZ}}.  {It seems that this approach} does not provide any underlying structure useful to study IFSs and its operator. This is the major advantage of our approach identifying $I(X)$ with the metric space of fuzzy sets.}

{First we show that the metric $d_\theta$ does not depend on the choice of a scaling map. We skip a straightforward proof.
\begin{lemma}\label{filipmetric}
For every { $\mu=\bigoplus_{x\in X}\lambda(x)\odot\delta_x,\;\nu=\bigoplus_{x\in X}\eta(x)\odot\delta_x\in I(X)$}, we have
$$
d_\theta(\mu,\nu)=\sup_{\beta\in(-\infty,0]}h(\{x\in X:\lambda(x)\geq \beta\},\{x\in X:\eta(x)\geq \beta\}){.}
$$
\end{lemma}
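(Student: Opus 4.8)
The plan is to unwind both sides of the claimed equality through the definitions and reduce everything to a single observation: an increasing homeomorphism $\theta:[-\infty,0]\to[0,1]$ carries super-level sets of a density to super-level sets of the corresponding fuzzy set, with the thresholds matched by $\theta$ itself. Concretely, for $\mu=\bigoplus_{x}\lambda(x)\odot\delta_x\in I(X)$ and $\alpha\in(0,1]$, the $\alpha$-cut of $\Theta(\mu)=\theta\circ\lambda$ is
$$
[\Theta(\mu)]^{\alpha}=\{x\in X:(\theta\circ\lambda)(x)\geq\alpha\}=\{x\in X:\lambda(x)\geq\theta^{-1}(\alpha)\},
$$
where the second equality uses that $\theta$ is increasing (hence $\theta(t)\geq\alpha\iff t\geq\theta^{-1}(\alpha)$), and $\theta^{-1}(\alpha)\in(-\infty,0]$ since $\alpha>0=\theta(-\infty)$. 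The same identity holds for $\nu=\bigoplus_x\eta(x)\odot\delta_x$.

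First I would invoke Lemma \ref{f2} to write $d_\theta(\mu,\nu)=d_{\mathrm{f}}(\Theta(\mu),\Theta(\nu))=\sup_{\alpha\in(0,1]}h([\Theta(\mu)]^{\alpha},[\Theta(\nu)]^{\alpha})$, so that the $\alpha=0$ cut never enters. Substituting the displayed identity for the $\alpha$-cuts gives
$$
d_\theta(\mu,\nu)=\sup_{\alpha\in(0,1]}h\big(\{x:\lambda(x)\geq\theta^{-1}(\alpha)\},\{x:\eta(x)\geq\theta^{-1}(\alpha)\}\big).
$$
Now I would perform the change of variable $\beta=\theta^{-1}(\alpha)$. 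Since $\theta^{-1}:(0,1]\to(-\infty,0]$ is a bijection (it is a homeomorphism onto its image, and $\theta^{-1}((0,1])=(-\infty,0]$ because $\theta(-\infty)=0$ and $\theta(0)=1$), the supremum over $\alpha\in(0,1]$ is exactly the supremum over $\beta\in(-\infty,0]$ of the same expression with $\theta^{-1}(\alpha)$ replaced by $\beta$. This yields precisely the asserted formula.

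There is essentially no hard step here; the only points requiring a word of care are (i) that all the cuts appearing are nonempty and compact, so that $h$ is finite and well defined — this is guaranteed because $\Theta(\mu),\Theta(\nu)\in\FK(X)$ by Lemma \ref{lem:theta_corresp} and $X$ is compact, so every positive-level cut is a nonempty closed subset of a compact space; and (ii) the boundary behaviour at $\alpha=1$ and $\beta=0$, which is harmless since $\theta^{-1}(1)=0$ is included on both sides. The independence of $d_\theta$ from the scaling map $\theta$ is then immediate, since the right-hand side of the displayed formula makes no reference to $\theta$. Because the argument is this routine, presenting it as "we skip a straightforward proof" (as the excerpt does) is entirely appropriate; if one wished to include it, the three displayed lines above suffice.
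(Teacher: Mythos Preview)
Your proof is correct and is exactly the straightforward argument the paper alludes to when it writes ``We skip a straightforward proof'': unwind $d_\theta$ via Lemma~\ref{f2}, identify $[\Theta(\mu)]^\alpha$ with the $\theta^{-1}(\alpha)$-superlevel set of the density, and reparametrize $\alpha\in(0,1]$ by $\beta=\theta^{-1}(\alpha)\in(-\infty,0]$. There is nothing to add.
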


}

\begin{proposition}\label{fil11}
The {metric space $(I(X),d_\theta))$ is compact and the} topology induced by $d_\theta$ is finer than the topology $\tau_p$. In other words, {$\tau_p\subset\tau_\theta$.}
\end{proposition}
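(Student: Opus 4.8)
The plan is to treat the two claims in turn, the first being immediate and the second reducing to continuity of the pointwise evaluation functionals.

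\emph{Compactness.} By the very definition $d_\theta(\mu,\nu)={d_{\on{f}}}(\Theta(\mu),\Theta(\nu))$, the map $\Theta$ is an isometric bijection of $(I(X),d_\theta)$ onto $(\FK(X),{d_{\on{f}}})$ — it is a bijection by Lemma \ref{lem:theta_corresp}. Since $X$ is compact, Theorem \ref{Fuzzy Space is Complete} gives that $(\FK(X),{d_{\on{f}}})$ is a compact metric space, and compactness is preserved by the homeomorphism $\Theta^{-1}$; hence $(I(X),d_\theta)$ is compact.

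\emph{The inclusion $\tau_p\subset\tau_\theta$.} Recall that $\tau_p$ is, by definition, the initial topology on $I(X)$ generated by the evaluation functionals $E_\psi(\mu):=\mu(\psi)$, $\psi\in C(X)$ (equivalently, the topology of pointwise convergence on $I(X)$). Consequently, to show that the identity map $(I(X),d_\theta)\to(I(X),\tau_p)$ is continuous — equivalently, that $\tau_p\subset\tau_\theta$ — it suffices to prove that each $E_\psi$ is continuous with respect to $d_\theta$. I would establish the stronger quantitative bound: if $\omega_\psi(t):=\sup\{|\psi(x)-\psi(y)|:d(x,y)\leq t\}$ denotes the modulus of continuity of $\psi$ (so $\omega_\psi$ is nondecreasing and $\omega_\psi(t)\to0$ as $t\to0^+$, $X$ being compact), then
$$
|\mu(\psi)-\nu(\psi)|\leq\omega_\psi\big(d_\theta(\mu,\nu)\big)\qquad\text{for all }\mu,\nu\in I(X).
$$
To see this, write $\mu=\bigoplus_{x\in X}\lambda(x)\odot\delta_x$ and $\nu=\bigoplus_{x\in X}\eta(x)\odot\delta_x$. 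By (\ref{fil1}) there is $x^*\in X$ with $\mu(\psi)=\lambda(x^*)+\psi(x^*)$, and since $\lambda(x_0)=0$ for some $x_0$ and $\psi$ is bounded, $\mu(\psi)\geq\psi(x_0)>-\infty$, so $\beta:=\lambda(x^*)\in(-\infty,0]$ and $x^*\in\{x:\lambda(x)\geq\beta\}$. By Lemma \ref{filipmetric}, $h(\{x:\lambda(x)\geq\beta\},\{x:\eta(x)\geq\beta\})\leq d_\theta(\mu,\nu)$, so we may pick $y^*$ with $\eta(y^*)\geq\beta$ and $d(x^*,y^*)\leq d_\theta(\mu,\nu)$. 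Then $\nu(\psi)\geq\eta(y^*)+\psi(y^*)\geq\beta+\psi(y^*)=\lambda(x^*)+\psi(y^*)$, whence $\mu(\psi)-\nu(\psi)\leq\psi(x^*)-\psi(y^*)\leq\omega_\psi(d(x^*,y^*))\leq\omega_\psi(d_\theta(\mu,\nu))$. Interchanging $\mu$ and $\nu$ gives the displayed inequality, hence the (uniform) continuity of every $E_\psi$, and the inclusion $\tau_p\subset\tau_\theta$ follows.

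The only step requiring a little care is the last estimate: one must observe that a maximizer $x^*$ of $\lambda+\psi$ necessarily sits at a \emph{finite} density level $\beta$, which is precisely what makes Lemma \ref{filipmetric} (whose supremum runs over real $\beta\leq0$) applicable, and then absorb the Hausdorff-distance gap between the $\beta$-superlevel sets of the two densities into the uniform modulus of continuity of $\psi$. The compactness part and the reduction of the topological inclusion to continuity of the $E_\psi$ are routine. One could equally well argue sequentially — since both $\tau_p$ and $\tau_\theta$ are metrizable — by checking that $d_\theta(\mu_n,\mu)\to0$ forces $\mu_n(\psi)\to\mu(\psi)$ for every $\psi\in C(X)$; but the quantitative bound above makes the direct argument just as short.
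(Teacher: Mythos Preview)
Your proof is correct and follows essentially the same route as the paper: both arguments pick a maximizer $x^*$ of $\lambda+\psi$, observe that its density level $\beta=\lambda(x^*)$ is finite, invoke Lemma~\ref{filipmetric} to find a nearby point $y^*$ in the $\beta$-superlevel set of the other density, and then use the uniform continuity of $\psi$ to close the estimate. The only cosmetic difference is that the paper phrases this sequentially (using that $\tau_p$ is metrizable and an $\varepsilon$--$\delta$ choice), whereas you package the same computation into the single quantitative bound $|\mu(\psi)-\nu(\psi)|\le\omega_\psi(d_\theta(\mu,\nu))$ and frame the conclusion via continuity of the evaluation functionals generating $\tau_p$; as you yourself note, the two presentations are interchangeable.
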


\begin{proof}{The metric space $(I(X),d_\theta))$ is compact as the metric $d_{\on{f}}$ is compact (recall that we assume that $(X,d)$ is compact).}
Since $(I(X),\tau_p)$ is metrizable, it is enough to show that, in fact, the convergence w.r.t. ${d_{\theta}}$ implies the convergence w.r.t. $\tau_p$. Hence let $\mu_n\to \mu$ w.r.t. ${d_{\theta}}$. It is sufficient to show that for any $\vp\in C(X)$, the sequence $(\mu_n(\vp))$ converges to $\mu(\vp)$.\\
Fix any $\varepsilon>0$ and take $\delta>0$ such that if $d(x,y)<\delta$ then $|\vp(x)-\vp(y)|<\varepsilon$. By our assumptions {and Lemma \ref{filipmetric}}, we can find $n_0\in\N$ such that for $n\geq n_0$ and {$\beta\in(-\infty,0]$, we have
\begin{equation}\label{filip2}
h(\{x\in X: \lambda_n (x)\geq \beta\},\{x\in X:\lambda(x)\geq \beta\})<\delta.
\end{equation}
}
where $\lambda_n ${, $n\in\N$,} are densities of $\mu_n$, and $\lambda$ is the density of $\mu$, respectively.\\
Fix $n\geq n_0$ {and choose} $x_1\in X$ such that
$$
\lambda_n (x_1)+\vp(x_1)=\sup\{\lambda_n (x)+\vp(x):x\in X\}=\mu_n(\vp).
$$
Setting $\beta_1:=\lambda_n (x_1)$, we see that $\beta_1>-\infty$, hence by (\ref{filip2}), we can find $x_1'\in X$ such that $d(x_1,x_1')<\delta$ and {${\lambda}(x_1')\geq \beta_1$.} 
 Moreover, as $d(x_1,x_1')<\delta$, we have that
{$
|\vp(x_1)-\vp(x_1')|<\varepsilon
$.}
All in all, we have
$$
\mu_n(\vp)=\lambda_n (x_1)+\vp(x_1)\leq {\lambda}(x_1')+\vp(x_1)-\vp(x_1')+\vp(x_1')\leq $$ $$\leq\lambda(x_1')+|\vp(x_1)-\vp(x_1')|+\vp(x_1')<
\sup\{\lambda(x)+\vp(x):x\in X\}+\varepsilon=\mu(\vp)+\varepsilon.
$$
Hence
{$
\mu_n(\vp)-\mu(\vp)<\varepsilon.
$}
In the same way we can show that $\mu(\vp)-\mu_n(\vp)<\varepsilon$ {and} we get
$$
|\mu_n(\vp)-\mu(\vp)|<\varepsilon
$$
and the result follows.
\end{proof}

A natural question arises if the metric ${d_{\theta}}$ induce the topology $\tau_p$. It turns out that it is not the case.
\begin{example}Let $X:=[0,1]$, and
for $n\in\N$, let $\mu_n$ be the idempotent measure whose density is defined by
$$
\lambda_n (x)=\left\{\begin{array}{ccc}-x&\mbox{for}&x\in\Big[0,\frac{1}{n}\Big]\\
-\frac{1}{n}&\mbox{for}&x\in\Big[\frac{1}{n},1\Big]\end{array}\right.
$$
and let $\mu$ be the idempotent measure whose density equals $\lambda(x):=0$ for all $x\in [0,1]$. Clearly, for every $\vp\in C([0,1])$, we have
$$
\mu_n(\vp)=\max\{\lambda_n (x)+\vp(x):x\in[0,1]\}\to{\max}\{\vp(x):x\in[0,1]\}=\mu(\vp)
$$
Which means that $\mu_n\to\mu$ w.r.t. $\tau_p$.\\
However, for any $\theta(-\frac{1}{n})<\alpha\leq 1$, we have that $\theta^{-1}(\alpha)>-\frac{1}{n}$ and
$$
{\{x\in[0,1]:\lambda_n (x)\geq \theta^{-1}(\alpha)\}\subset \Big[0,\frac{1}{n}\Big]}
$$
and
$$
{\{x\in[0,1]:{\lambda}(x)\geq \theta^{-1}(\alpha)\}=[0,1].}
$$
Hence for $n\geq 2${, by Lemma \ref{filipmetric},} we have
$$
{d_{\theta}(\mu_n,\mu)\geq h(\{x\in [0,1]:\lambda_n (x)\geq\theta^{-1}(\alpha)\},\{x\in[0,1]:\lambda(x)\geq\theta^{-1}(\alpha)\}) \geq \frac{1}{2}}
$$
so $(\mu_n)$ is not convergent to $\mu$ w.r.t. ${d_{\theta}}$.
\end{example}

\section{The existence and properties of idempotent invariant measures via compact fuzzy sets}
Now we can state the alternative proof of the existence of invariant idempotent measure, and give its description in terms of fuzzy attractors. We assume that $\theta$ and $\Theta$ have the same meaning as earlier {and that $(X,d)$ is a compact metric space}.
\begin{theorem}\label{main}
Assume that ${\mpS}=(X,(\phi_j)_{j=1}^L,(q_j)_{j=1}^L)$ is a Matkowski contractive max-plus {normalized IFS. Then $\mpS$ generates the unique idempotent invariant measure $\mu_{\mathcal{S}}$.}

{In fact, the idempotent Markov operator $M_{\mathcal{S}}$ is Matkowski contractive w.r.t. $d_\theta$ with witness $\vp_\mS:=\max\{\vp_j:j=1,...,L\}$, where $\vp_j$s are witnesses for $\phi_j$s.

In particular, if $\mpS$ is Banach contractive, then $M_\mS$ is Banach contractive w.r.t. $d_\theta$ and $\on{Lip}(M_\mS)\leq\max\{\on{Lip}(\phi_j):j=1,...,L\}$.}
\end{theorem}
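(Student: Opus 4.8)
The plan is to reduce everything to the already-established fuzzy Hutchinson--Barnsley theorem (Theorem \ref{lem3ff}) via the conjugation of Theorem \ref{fil7}. The key observation is that $\Theta\circ M_\mS = Z_\mS\circ\Theta$, where $\fS$ is the corresponding fuzzy IFS; since the underlying IFS $\mS$ is the same for both $\mpS$ and $\fS$, the fuzzy IFS $\fS$ is Matkowski contractive with the same witnesses $\vp_j$. So the strategy is: (i) transport contractivity of $Z_\mS$ back to $M_\mS$ using that $\Theta$ is an isometry between $(I(X),d_\theta)$ and $(\FK(X),d_{\on{f}})$ by the very definition of $d_\theta$; (ii) deduce existence and uniqueness of the fixed point $\mu_\mS := \Theta^{-1}(u_\mS)$ from the Matkowski fixed point theorem on the complete (indeed compact, by Proposition \ref{fil11}) metric space $(I(X),d_\theta)$; (iii) upgrade the convergence of iterates from $d_\theta$ to $\tau_p$ using Proposition \ref{fil11}, which says $\tau_p\subset\tau_\theta$, so $d_\theta$-convergence implies $\tau_p$-convergence. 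This last point is what makes $\mu_\mS$ the invariant idempotent measure in the sense of the definition from Section \ref{sec:IFSs on idempotent probabilities}, which requires convergence of iterates in $\tau_p$.

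In more detail, the first step is the isometry/contractivity transfer. For any $\mu,\nu\in I(X)$, one computes
$$
d_\theta(M_\mS(\mu),M_\mS(\nu)) = d_{\on{f}}(\Theta(M_\mS(\mu)),\Theta(M_\mS(\nu))) = d_{\on{f}}(Z_\mS(\Theta(\mu)),Z_\mS(\Theta(\nu)))
$$
using the definition of $d_\theta$ and Theorem \ref{fil7}. By Theorem \ref{lem3ff}, $Z_\mS$ is a Matkowski contraction with witness $\vp_\mS=\max\{\vp_j:j=1,\dots,L\}$, so the right-hand side is at most $\vp_\mS(d_{\on{f}}(\Theta(\mu),\Theta(\nu))) = \vp_\mS(d_\theta(\mu,\nu))$. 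Hence $M_\mS$ is a Matkowski contraction on $(I(X),d_\theta)$ with the same witness $\vp_\mS$. The Banach case and the Lipschitz estimate $\on{Lip}(M_\mS)\leq\max\{\on{Lip}(\phi_j)\}$ follow identically, invoking the corresponding assertion of Theorem \ref{lem3ff}.

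Next, $(I(X),d_\theta)$ is complete since it is compact by Proposition \ref{fil11} (alternatively, because $(\FK(X),d_{\on{f}})$ is complete by Theorem \ref{Fuzzy Space is Complete} and $\Theta$ is an isometric bijection). The Matkowski fixed point theorem then yields a unique fixed point $\mu_\mS\in I(X)$ of $M_\mS$ such that $M_\mS^{(n)}(\mu)\to\mu_\mS$ in $d_\theta$ for every $\mu\in I(X)$. To match the definition of invariant idempotent measure, I invoke Proposition \ref{fil11}: $\tau_p\subset\tau_\theta$, so $d_\theta$-convergence implies $\tau_p$-convergence, giving $M_\mS^{(n)}(\mu)\overset{\tau_p}{\to}\mu_\mS$. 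Uniqueness in the $\tau_p$ sense: if $\nu$ were another measure with $M_\mS^{(n)}(\mu)\overset{\tau_p}{\to}\nu$ for all $\mu$, then in particular taking $\mu=\nu$ (which is $M_\mS$-fixed) gives $\nu=\mu_\mS$; but one should check $\nu$ is a genuine fixed point first, which follows since $M_\mS$ is $\tau_p$-continuous (or simply: any $\tau_p$-limit of $M_\mS^{(n)}(\mu)$ and of $M_\mS^{(n+1)}(\mu)$ must coincide and be fixed). This is a minor point.

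The main obstacle, such as it is, is bookkeeping rather than mathematics: one must be careful that the ``fuzzy correspondence'' $\fS$ of $\mpS$ really is a Matkowski contractive IFZS in the precise sense of Definition \ref{IFZS definition}, i.e. that the maps $d_j(t)=\theta(q_j+\theta^{-1}(t))$ form an admissible system of grey level maps and that Matkowski contractivity of $\fS$ is by definition inherited from $\mS$ — both of which are already recorded in the remark following the definition of the corresponding fuzzy IFS. Granting that, no new estimate is needed; the entire proof is a transport of structure along the isometric conjugacy $\Theta$, together with the one genuinely nontrivial input already proved, namely $\tau_p\subset\tau_\theta$, which guarantees the fixed-point convergence is strong enough to meet the definition.
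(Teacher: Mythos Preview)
Your proposal is correct and follows essentially the same route as the paper: the paper also computes $d_\theta(M_\mS(\mu_1),M_\mS(\mu_2))=d_{\on{f}}(Z_\mS(\Theta(\mu_1)),Z_\mS(\Theta(\mu_2)))\leq\vp_\mS(d_\theta(\mu_1,\mu_2))$ via Theorem~\ref{fil7} and Theorem~\ref{lem3ff}, applies the Matkowski fixed point theorem, and then invokes Proposition~\ref{fil11} to pass from $d_\theta$-convergence to $\tau_p$-convergence. Your additional remarks on completeness and on uniqueness in the $\tau_p$ sense are fine but not present in the paper's terser argument.
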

\begin{proof}
Using Theorem \ref{fil7} and {Theorem} \ref{lem3ff}, we have for all $\mu_1,\mu_2\in I(X)$:
$$
d_\theta(M_\mS(\mu_1),M_\mS(\mu_2))={d_{\on{f}}}(\Theta(M_\mS(\mu_1)),\Theta(M_\mS(\mu_2)))=$$ $$={d_{\on{f}}}({Z_{\mS}}(\Theta(\mu_1)),{Z_{\mS}}(\Theta(\mu_2)))\leq \varphi_\mS({d_{\on{f}}}(\Theta(\mu_1),\Theta(\mu_2)))=\varphi_\mS(d_\theta(\mu_1,\mu_2))
$$
Hence $M_\mS$ is a Matkowski contraction with a witness $\vp_\mS$. In particular, $M_{\mathcal{S}}$ has the unique fixed point $\mu_{\mathcal{S}}\in I(X)$, and for every $\mu\in I(X)$, the sequence of iterations $({M^{(n)}_{\mathcal{S}}}(\mu))$ converges to $\mu_{\mathcal{S}}$ w.r.t. $d_\theta$. By Proposition \ref{fil11}, we have that $(M_{\mathcal{S}}(\mu))$ converges to $\mu_{\mathcal{S}}$ w.r.t. $\tau_p$, hence $\mu_{\mathcal{S}}$ is the idempotent invariant measure of $\mathcal{S}$.
\end{proof}

{The next theorem gives the description of the idempotent invariant measure in terms of fuzzy-setting. It follows directly from  Theorem \ref{lem3ff} and Theorem \ref{fil7}.}
\begin{theorem}\label{fil13}
{Let ${\mpS}$ be a max-plus normalized IFS and $\fS$ be its fuzzy correspondence. Then $\mpS$ generates invariant idempotent measure $\mu_\mS$ if and only if $\fS$ generates fuzzy attractor $u_\mS$, and if this holds, then
$$\mu_{\mathcal{S}}=\Theta^{-1}(u_{\mS})$$
i.e., the density of $\mu_\mS$ equals $\theta^{-1}\circ u_{\mS}$. Moreover, the support $\on{supp}(\mu_\mS)=A_\mS$, the attractor of the underlying IFS $\mS$.

Also, for every $n\in\N$,}
\begin{equation}\label{filipAA}
M^{(n)}_{\mS}=\Theta^{-1}\circ Z^{(n)}_\mS\circ\Theta{.}
\end{equation}
\end{theorem}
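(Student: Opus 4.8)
The plan is to use the conjugation identity of Theorem~\ref{fil7} together with the fact that, by the very definition $d_\theta(\mu,\nu):={d_{\on{f}}}(\Theta(\mu),\Theta(\nu))$, the bijection $\Theta$ is an \emph{isometry} of $(I(X),d_\theta)$ onto $(\FK(X),{d_{\on{f}}})$; the only genuinely analytic ingredient, compactness of these spaces, will be invoked only where it is unavoidable. First I would iterate the conjugation: an easy induction on $n$ from $\Theta\circ M_\mS=Z_\mS\circ\Theta$ gives $\Theta\circ M_\mS^{(n)}=Z_\mS^{(n)}\circ\Theta$ for all $n$, and composing on both sides with $\Theta^{-1}$ yields (\ref{filipAA}). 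Taking $n=1$ also shows that $\mu$ is a fixed point of $M_\mS$ if and only if $\Theta(\mu)$ is a fixed point of $Z_\mS$, and that then, since $\Theta(\mu)=\theta\circ\lambda_\mu$, the density of $\mu$ equals $\theta^{-1}\circ\Theta(\mu)$; this will give $\mu_\mS=\Theta^{-1}(u_\mS)$ once the two objects are shown to correspond.

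For the implication ``$\fS$ generates $u_\mS$ $\Rightarrow$ $\mpS$ generates $\mu_\mS:=\Theta^{-1}(u_\mS)$'': the fixed-point equation $M_\mS(\mu_\mS)=\mu_\mS$ transfers by the previous step, and for the attracting property, given $\mu\in I(X)$ and $u:=\Theta(\mu)$, the isometry property yields $d_\theta(M_\mS^{(n)}(\mu),\mu_\mS)={d_{\on{f}}}(Z_\mS^{(n)}(u),u_\mS)\to0$; hence $M_\mS^{(n)}(\mu)\to\mu_\mS$ w.r.t. $d_\theta$, and by Proposition~\ref{fil11} ($\tau_p\subset\tau_\theta$) also w.r.t. $\tau_p$. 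So $\mu_\mS$ satisfies the defining properties of the invariant idempotent measure (uniqueness is built into that definition, or follows from Hausdorffness of $\tau_p$).

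The reverse implication is where I expect the real work: I must upgrade $\tau_p$-convergence of the iterates $(M_\mS^{(n)}(\mu))$ to ${d_{\on{f}}}$-convergence of $(Z_\mS^{(n)}(u))$, and this cannot be done naively, $\tau_\theta$ being strictly finer than $\tau_p$ (as the example distinguishing $\tau_\theta$ from $\tau_p$ shows). Suppose $\mpS$ generates $\mu_\mS$ and put $u_\mS:=\Theta(\mu_\mS)$; then $Z_\mS(u_\mS)=u_\mS$ by the conjugation of fixed points. Fix $u\in\FK(X)$ and $\mu:=\Theta^{-1}(u)$. Since $(\FK(X),{d_{\on{f}}})$ is compact (Theorem~\ref{Fuzzy Space is Complete}), every subsequence of $(Z_\mS^{(n)}(u))$ has a further subsequence $Z_\mS^{(n_k)}(u)\to v$ in ${d_{\on{f}}}$; applying the isometry $\Theta^{-1}$ shows $M_\mS^{(n_k)}(\mu)\to\Theta^{-1}(v)$ w.r.t. $d_\theta$, hence w.r.t. $\tau_p$, while by hypothesis $M_\mS^{(n_k)}(\mu)\to\mu_\mS$ w.r.t. $\tau_p$; as $\tau_p$ is Hausdorff, $\Theta^{-1}(v)=\mu_\mS$, i.e. $v=u_\mS$. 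Thus every ${d_{\on{f}}}$-subsequential limit of $(Z_\mS^{(n)}(u))$ equals $u_\mS$, and in the compact space $(\FK(X),{d_{\on{f}}})$ this forces $Z_\mS^{(n)}(u)\to u_\mS$; hence $u_\mS$ is the fuzzy attractor of $\fS$.

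It remains to identify the support. Lemma~\ref{lem:theta_corresp} gives $\on{supp}(\mu_\mS)=\on{supp}(\Theta(\mu_\mS))=\on{supp}(u_\mS)$, and Theorem~\ref{lem3ff} identifies $\on{supp}(u_\mS)$ with $A_\mS$, the attractor of the underlying IFS $\mS$. (In the general, not necessarily contractive, situation this last identity follows instead from the relation $[Z_\mS(u)]^0=F_\mS([u]^0)$ valid for every $u$ — which uses that the maps $d_j(t)=\theta(q_j+\theta^{-1}(t))$ vanish only at $t=0$ — so that $F_\mS^{(n)}(K)=[Z_\mS^{(n)}(u)]^0\to[u_\mS]^0$ for any $u$ with $[u]^0=K$, whence $[u_\mS]^0$ is exactly the attractor of $\mS$.)
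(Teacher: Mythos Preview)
Your argument is correct and rests on the same backbone as the paper---the conjugation identity $\Theta\circ M_\mS=Z_\mS\circ\Theta$ of Theorem~\ref{fil7}---but the paper's proof is literally the single sentence ``It follows directly from Theorem~\ref{lem3ff} and Theorem~\ref{fil7}'', so you have supplied considerably more than the authors do. In particular, your treatment of the reverse implication is a genuine addition: the paper does not address the subtlety that convergence of $M_\mS^{(n)}(\mu)$ is assumed only in $\tau_p$, which is strictly coarser than $\tau_\theta$, so one cannot simply transport it through the isometry $\Theta$. Your compactness/subsequence argument (using Theorem~\ref{Fuzzy Space is Complete}, Proposition~\ref{fil11}, and Hausdorffness of $\tau_p$) closes this gap cleanly. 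Likewise, your parenthetical handling of $\on{supp}(\mu_\mS)=A_\mS$ in the non-contractive case via $[Z_\mS(u)]^0=F_\mS([u]^0)$ is more careful than the paper's bare reference to Theorem~\ref{lem3ff}, which as stated requires Matkowski contractivity. In short: same route, but you have actually paved it.
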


{ Before we start with the applications we would like to point out some remarks.
\begin{remark}\label{remarknew}\emph{
  Our first remark is that {all our results for IFSs} on idempotent measures can be immediately {adopted for the setting of \emph{generalized IFSs} (GIFSs)}, introduced by R. Miculescu and A. Mihail in 2008 (see \cite{Mih08} {and many other papers}) because we already studied the analogous of the results we employ here for fuzzy GIFSs and their discretizations in \cite{Oli17} and \cite{COS21}. In the last section we give a bit more details and present appropriate algorithms and use them for getting images of some idempotent invariant measures for GIFSs.\\
  The second one is that the theory of idempotent measures fits perfectly to generalize the work of \cite{LTV} to {fuzzy GIFSs} because the original work using Radon measures seems to be impossible to generalize for GIFSs of degree bigger or equal to 2.}
\end{remark}

\section{Applications of algorithm for fuzzy IFSs to obtain images of idempotent invariant measures}
In this section we show that we can apply the above results and discretizing ideas from our previous papers \cite{COS20} and \cite{COS21} to obtain an algorithm generating approximation of idempotent measures of Banach contractive {IFSs. We first present a deterministic algorithm, then its discrete version motivated by mentioned papers \cite{COS20} and \cite{COS21}

\subsection{Deterministic algorithm for generating idempotent invariant measure}
Assume that $\mpS$ is a Matkowski contractive max-plus normalized IFS and $\mu\in I(X)$. According to Theorem \ref{main}, iterations $M_\mS^{(n)}(\mu)$ gives better and better approximations of the idempotent measure $\mu_\mS$. As the whole information of the idempotent measures give their densities, the presented algorithm will deliver densities of measures $M_\mS^{(n)}(\mu)$. Note that if the $\mpS$ is Banach contractive, then after $n$ iterations we obtain the approximation of the attractor $\mu_\mS$ with the resolution
$$\delta\leq\frac{\alpha_\mS^n}{1-\alpha_\mS}d_\theta(\mu,M_\mS(\mu)){.}$$
In fact, we automatically obtain also approximations of the attractor $A_\mS$ of the underlying IFS $\mS$ with the same resolution.
{\tt
\begin{tabbing}
aaa\=aaa\=aaa\=aaa\=aaa\=aaa\=aaa\=aaa\= \kill
     \> \texttt{{ { DeterminIFSIdempMeasureDraw}}(${\mS}$)}\\
     \> {\bf input}: \\
     \> \>  \> $K \subseteq {X}$, any finite and {nonempty} subset (a list of points in ${X}$).\\
     \> \>  \> $\nu$, any discrete idempotent measure such that $\supp (\nu)= K$. \\
     \> {\bf output:} \\
     \> \> \>  A bitmap representing an approximation of the attractor.\\
     \> \> \> A bitmap  image representing a discrete invariant idempotent measure,\\ \>\>\>
 with a gray scale color bar indicating\\ \>\>\> the mass of each pixel.\\
     \> {\bf Initialize}  $\displaystyle\mu:={\bigoplus_{x \in X}{-\infty\odot\delta_{x}}} $,  $W:=\varnothing$\\
     \> {\bf for n from 1 to N do}\\
     \>  \>{\bf for $\ell$ from 1 to $\operatorname{Card}({K})$ do}\\
     \>  \>  \>  \>{\bf for $j$ from $1$ to $L$ do}\\
     \>  \>  \>  \>  \>${x}:={K}[\ell]$\\
     \>  \>  \>  \>  \>${y}:={\phi}_{j}({x})$\\
     \>  \>  \>  \>  \>${W:=W\cup\{y\}}$\\
     \>  \>  \>  \>  \> {$\mu({y}):=\max\{\mu({y}) ,  q_{j} + \nu({x})\}$}\\
     \>  \>  \>  \>{\bf end do} \\
     \>  \>{\bf end do} \\
     \> \> $\nu:=\mu$, $\displaystyle\mu:=\bigoplus_{x \in X}-\infty\odot\delta_{x}$, {$K:=W$ and $W:=\varnothing$}\\
     \>{\bf end do}\\
     \>{\bf return: Print} {$K$} and $\nu$ \\
\end{tabbing}}
\begin{remark}\emph{
  Actually, we could obtain our approximation of $\mu_{\mS}$ by first approximating $u_{\mS}$ via \texttt{FuzzyIFSDraw($\mS$)} and then {use} $\mu_{\mS}=\Theta^{-1}(u_{\mS})$. However the adapted  version we presented here, \texttt{{ { DeterminIFSIdempMeasureDraw}}($\mS$)}, is {much faster} because we do not need the discretization of the extension principle. This is why we rewrite the algorithm \texttt{FuzzyIFSDraw($\mS$)} as \texttt{{ DeterminIFSIdempMeasureDraw}}($\mS$). The reciprocal {of this approach} will not work because not all fuzzy {IFSs are associated} to {max-plus normalized IFSs}.}
\end{remark}
\begin{example}\label{discrete_Fern} 
This first example is based on a very well-known fractal, the Barnsley Fern. It is generated by the max-plus normalized IFS $\mpS=(X,(\phi_j)_{j=1}^4,(q_j)_{j=1}^4)$ defined by:
\[\mpS:
\left\{
  \begin{array}{ll}
      \phi_1(x,y)   & = (0.856 x + 0.0414 y + 0.07, -0.0205 x+ 0.858 y + 0.147)\\
      \phi_2(x,y)   & = ( 0.244 x - 0.385 y + 0.393, 0.176 x + 0.224 y + 0.102)\\
      \phi_3(x,y)   & = ( -0.144 x + 0.39 y + 0.527, 0.181 x + 0.259 y - 0.014 ) \\
      \phi_4(x,y)   & =( 0.486, 0.031 x + 0.216 y + 0.05)\\
  \end{array}
\right.
\]
and $q_1= -11, q_2= -7, q_3= 0$ and $q_4=0$.\\
We initialize the algorithm with $\displaystyle\nu:=\delta_{(0.5,0.5)}$ meaning that $K=\{(0.5,0.5)\}$, after each iteration we get respectively 4, 16, 64, 256, 1024, 4096, 16384, 65536, 262144, 1048576 and 4194304 points in $K$, which are depicted in Figure~\ref{Barnsley Fern} in a gray scale where each point brightness represents its density.
 \begin{figure}[H]
  \centering
  \includegraphics[width=9cm]{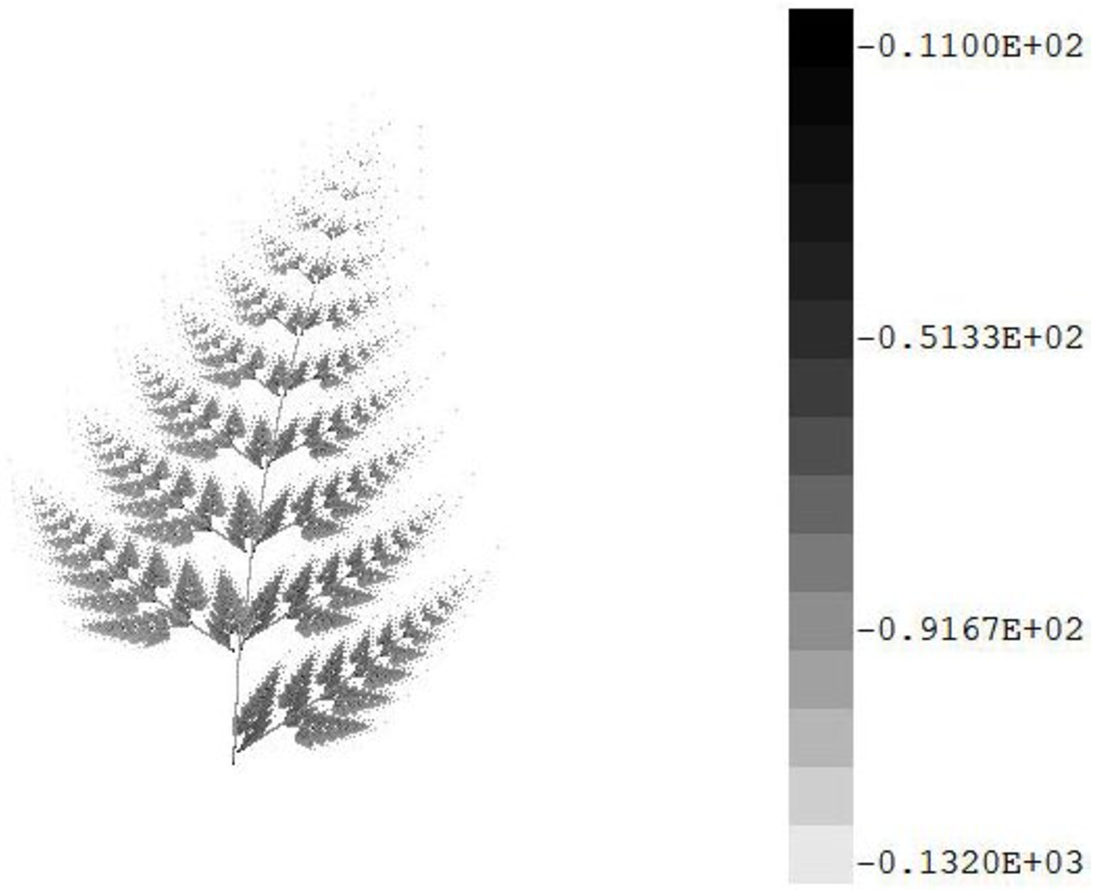}
  \caption{Output of algorithm \texttt{{ { DeterminIFSIdempMeasureDraw}}($\mS$)} after 12 iterations.}\label{Barnsley Fern}
  \end{figure}
\end{example}
In the next sections we will present alternative algorithm that gives more control on the number of points that have to be calculated. It bases on the idea on approximating IFSs by discrete ones, defined on appropriately dense grids. See \cite{COS20} and \cite{COS21}.
}
\subsection{Discretization of fuzzy IFSs}
\begin{definition}\emph{
A subset $\hat{X}$ of a metric space $(X,d)$ is called an }$\varepsilon$-net of $X$\emph{, if for every $x\in X$, there exists $y\in\hat{X}$ so that $d(x,y)\leq\varepsilon$. An $\varepsilon$-net $\hat{X}$ of $X$ is called }proper\emph{, if for every bounded set $D\subset X$, the set $D\cap\hat{X}$ is finite.\\
A map $r:X\to\hat{X}$ so that $r(x)=x$ for $x\in\hat{X}$ and $d(x,r(x))\leq\varepsilon$ for all $x\in X$ is called an }$\varepsilon$-projection\emph{ of $X$ to $\hat{X}$.\\
For $\phi:X\to X$, by its }$r$-discretization, \emph{we will call the map $\hat{\phi}:=(r\circ \phi)_{\vert \hat{X}}$.}
\end{definition}
Now we formulate \cite[Theorem 6.3]{COS21}, which is the key point for algorithms presented in that paper. Note that if {$\fS=(X,(\phi_j)_{j=1}^L,(d_j)_{j=1}^L)$ is a fuzzy IFS}, $\hat{X}$ is an $\varepsilon$-net of $X$ and $r$ is an $\varepsilon$-projection, then we can consider the {fuzzy} IFS $\hat{{\fS}}=(X,(\hat{\phi_j})_{j=1}^L,(d_j)_{j=1}^L)$ consisting of discretizations of $\phi_j$. The result below show that iterations of fuzzy Hutchinson operator of $Z_{\hat{\mS}}$ can somehow approximate the fuzzy attractor of $\mS$.\\
Below, for a fuzzy set $u\in {\F(X)}$, we set $e(u)$ to be the natural extension of $u$ to ${\F(X)}$, that is,
$$
e(u)(x):=\left\{\begin{array}{ccc}u(x)&\mbox{if}&x\in X{;}\\0&\mbox{if}&x\in X\setminus\hat{X}{.}\end{array}\right.
$$
Note that if $\hat{X}$ is proper, then $e(u)\in{\FK(X)}$ for all $u\in{\FK(\hat{X})}$
\begin{theorem}\label{fil20}
Let $(X,d)$ be a complete metric space and {${\fS}=(X, (\phi_j)_{j=1}^{L}, (d_{j})_{j=1}^{L})$} be a \textbf{Banach contractive} {fuzzy IFS} on $X$. Let $\ve>0$, ${\hat{X}}$ be a proper $\ve$-net, $r:X\to {\hat{X}}$ be an $\ve$-projection on ${\hat{X}}$ and $\hat{{\fS}}=(X, (\hat{\phi}_j)_{j=1}^L, (d_{j})_{j=1}^L)$, where $\hat{\phi}_j:=(r\circ \phi_{j})_{\vert {\hat{X}}}$ is the discretization of $\phi_j$.\\
Then for any $u\in{{\FK(\hat{X})}}$ and $n\in\N$,
{$${d_{\on{f}}}\Big(e\Big({Z}_{\hat{\mS}}^{(n)}(u)\Big),u_\mS\Big)\leq\frac{\ve}{1-\alpha_\mS}+\alpha_\mS^n{\;d_{\on{f}}}(e(u),u_\mS)$$}
where $u_\mS$ is the fuzzy attractor of $\mS$ and $\alpha_\mS:=\max\{\on{Lip}(\phi_j):j=1,...,L\}$. 
\end{theorem}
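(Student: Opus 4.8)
\medskip\noindent\textbf{Plan of proof.} The natural route is a shadowing (or ``collage'') argument: control one step of the discretized operator $Z_{\hat{\mS}}$ against one step of the genuine operator $Z_\mS$, then iterate and sum a geometric series using the Banach contractivity of $Z_\mS$ provided by Theorem~\ref{lem3ff}. First one checks that everything is well posed: properness of $\hat X$ forces every $u\in\FK(\hat X)$ to have finite support (a compact subset of the discrete space $\hat X$ is finite), so usc holds automatically, the admissibility condition that $d_j(1)=1$ for some $j$ together with normality of $u$ keeps $Z_{\hat{\mS}}(u)$ normal, and hence $Z_{\hat{\mS}}$ maps $\FK(\hat X)$ into itself; properness also gives $e\big(Z_{\hat{\mS}}^{(n)}(u)\big)\in\FK(X)$ for all $n$, so all quantities in the statement make sense.

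\emph{Step 1 (the one-step estimate).} The core claim is that for every $u\in\FK(\hat X)$,
\[
d_{\on{f}}\big(e(Z_{\hat{\mS}}(u)),\,Z_\mS(e(u))\big)\le\ve .
\]
By Lemma~\ref{f2} it suffices to bound $h\big([Z_{\hat{\mS}}(u)]^\alpha,[Z_\mS(e(u))]^\alpha\big)$ for $\alpha\in(0,1]$ (for such $\alpha$, $[e(Z_{\hat{\mS}}(u))]^\alpha=[Z_{\hat{\mS}}(u)]^\alpha$). Here one uses the standard $\alpha$-cut calculus: $[\bigvee_jv_j]^\alpha=\bigcup_j[v_j]^\alpha$; since each $d_j$ is nondecreasing, right continuous and satisfies $d_j(0)=0$, the set $\{t:d_j(t)\ge\alpha\}$ equals $[\beta_j(\alpha),1]$ with $\beta_j(\alpha):=\inf\{t:d_j(t)\ge\alpha\}\in(0,1]$, so $[d_j\circ v]^\alpha=[v]^{\beta_j(\alpha)}$; and for a continuous map $T$ and a usc, compactly supported fuzzy set $v$ one has $[T(v)]^\alpha=T([v]^\alpha)$ (for the discretized system this is trivial by finiteness, and there $\hat\phi_j([u]^\beta)=r(\phi_j([u]^\beta))$). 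Writing $B_j:=[u]^{\beta_j(\alpha)}=[e(u)]^{\beta_j(\alpha)}\subseteq\hat X$, these identities give
\[
[Z_{\hat{\mS}}(u)]^\alpha=\bigcup_{j=1}^L r\big(\phi_j(B_j)\big),\qquad [Z_\mS(e(u))]^\alpha=\bigcup_{j=1}^L\phi_j(B_j),
\]
and since $r$ is an $\ve$-projection one has $h\big(r(A),A\big)\le\ve$ for every nonempty $A$, while $h\big(\bigcup_jA_j,\bigcup_jC_j\big)\le\max_j h(A_j,C_j)$ (dropping the indices with $B_j=\varnothing$, which contribute nothing to either side). Hence the two cuts lie at Hausdorff distance at most $\ve$, and taking the supremum over $\alpha$ proves the claim.

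\emph{Step 2 (iteration).} Put $v_n:=e\big(Z_{\hat{\mS}}^{(n)}(u)\big)\in\FK(X)$ and $a_n:=d_{\on{f}}(v_n,u_\mS)$. Applying Step~1 to $Z_{\hat{\mS}}^{(n-1)}(u)\in\FK(\hat X)$ yields $d_{\on{f}}\big(v_n,Z_\mS(v_{n-1})\big)\le\ve$. By Theorem~\ref{lem3ff}, $Z_\mS$ is a Banach contraction with $\on{Lip}(Z_\mS)\le\alpha_\mS$ whose fixed point is $u_\mS$, so
\[
a_n\le d_{\on{f}}\big(v_n,Z_\mS(v_{n-1})\big)+d_{\on{f}}\big(Z_\mS(v_{n-1}),Z_\mS(u_\mS)\big)\le\ve+\alpha_\mS a_{n-1}.
\]
Unrolling from $a_0=d_{\on{f}}(e(u),u_\mS)$ gives $a_n\le\ve\sum_{k=0}^{n-1}\alpha_\mS^k+\alpha_\mS^n a_0\le\frac{\ve}{1-\alpha_\mS}+\alpha_\mS^n\,d_{\on{f}}(e(u),u_\mS)$, which is exactly the asserted bound.

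\emph{Main obstacle.} Everything rests on Step~1, and the only genuinely delicate point is the $\alpha$-cut calculus: verifying that $[d_j\circ v]^\alpha$ is still a closed, honest cut of $v$ (this is precisely where right continuity of $d_j$ is needed) and that $[T(v)]^\alpha=T([v]^\alpha)$ rather than merely $\supseteq$ (which uses compactness of the cuts of $v$, continuity of $T$, and the $\sup$-convention in Zadeh's principle). These identities are part of the fuzzy-set machinery already developed in \cite{Oli17}; granting them, the $\ve$-bound is immediate from the definition of an $\ve$-projection and the rest is the elementary geometric-series estimate above.
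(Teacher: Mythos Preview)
Your argument is correct: the one-step collage estimate $d_{\on{f}}\big(e(Z_{\hat{\mS}}(u)),Z_\mS(e(u))\big)\le\ve$ via the $\alpha$-cut identities, followed by the recursion $a_n\le\ve+\alpha_\mS a_{n-1}$ and a geometric sum, is exactly the right shape for this kind of shadowing bound, and the delicate points you flag (right continuity of $d_j$ for the cut formula $[d_j\circ v]^\alpha=[v]^{\beta_j(\alpha)}$, attainment of the supremum in Zadeh's extension) are genuinely the only places requiring care. Note, however, that the present paper does not prove Theorem~\ref{fil20} at all: it is quoted verbatim as \cite[Theorem~6.3]{COS21}, so there is no in-paper proof to compare against; your write-up is a self-contained reconstruction of the standard argument behind that cited result.
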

We are going to formulate similar result for discretization of max-plus {normalized} IFSs. We could prove it directly, but we will make use Theorem \ref{fil7} and deduce it from the above Theorem \ref{fil20}.
\subsection{Discretization of idempotent measures}
For a compact space $(X,d)$ and proper (hence necessarily finite) $\varepsilon$-net and an idempotent measure $\mu\in I(\hat{X})$ with the density $\lambda:\hat{X}\to [-\infty,0]$, we define the idempotent measure $e(\mu)$ whose density $e(\lambda):X\to [-\infty,0]$ is a natural extension of $\lambda$, that is
$$
e(\lambda)(x):=\left\{\begin{array}{ccc}\lambda(x)&\mbox{if}&x\in \hat{X}{;}\\-\infty&\mbox{if}&x\in X\setminus\hat{X}{.}\end{array}\right.
$$
Clearly, $e(\mu)\in I(X)$ for every $\mu\in I(X)$.
\begin{theorem}\label{fil21}
{Let} {${\mpS}=(X, (\phi_j)_{j=1}^{L}, (q_{j})_{j=1}^{L})$} be a \textbf{Banach contractive} max-plus {normalized} IFS on $X$. Let $\ve>0$, ${\hat{X}}$ be a proper $\ve$-net, $r:X\to {\hat{X}}$ be an $\ve$-projection on ${\hat{X}}$ and ${\hat{\mS}_{\on{mp}}}=(X, (\hat{\phi}_j)_{j=1}^L, (d_{j})_{j=1}^L)$, where $\hat{\phi}_j:=(r\circ \phi_{j})_{\vert {\hat{X}}}$ is the discretization of $\phi_j$.\\
Then for any $\mu\in I(\hat{X})$ and $n\in\N$,
{$$d_\theta\Big(e\Big(M^{(n)}_{\hat{\mS}}(\mu)\Big),\mu_\mS\Big)\leq\frac{\ve}{1-\alpha_\mS}+\alpha_\mS^n{\;d_{\theta}}(e(\mu),\mu_\mS)$$}
where $\mu_\mS$ is the idempotent invariant measure of $\mS$ and $\alpha_\mS:=\max\{\on{Lip}(\phi_j):j=1,...,L\}$.
\end{theorem}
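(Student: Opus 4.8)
The plan is not to argue directly but to transport the statement through the conjugation $\Theta$ and reduce it to Theorem \ref{fil20}. First observe that the $\ve$-net $\hat X$ is compact --- in fact finite, since $X$ is compact and $\hat X$ is proper --- so every construction of the previous sections is available with $\hat X$ in place of $X$; in particular, by Lemma \ref{lem:theta_corresp}, $\Theta$ is a bijection between $I(\hat X)$ and $\FK(\hat X)$. Next, since the grey level maps $d_j(t)=\theta(q_j+\theta^{-1}(t))$ depend only on the weights $q_j$ and on $\theta$, and not on the maps $\phi_j$, the fuzzy correspondence of the discretized max-plus IFS $\hat{\mpS}=(\hat X,(\hat\phi_j)_{j=1}^L,(q_j)_{j=1}^L)$ is precisely the discretized fuzzy IFS $\hat{\fS}=(\hat X,(\hat\phi_j)_{j=1}^L,(d_j)_{j=1}^L)$ occurring in Theorem \ref{fil20}; moreover $\fS$, the fuzzy correspondence of $\mpS$, is Banach contractive because its underlying IFS $\mS$ is, with contraction constant $\alpha_\fS=\alpha_\mS=\max_j\on{Lip}(\phi_j)$.

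The key step is to record that $\Theta$ intertwines all the operations involved. Applying Theorem \ref{fil7} to $\hat{\mpS}$ and $\hat{\fS}$ (on the ground space $\hat X$) gives $\Theta\circ M_{\hat{\mS}}=Z_{\hat{\mS}}\circ\Theta$, hence $\Theta\circ M_{\hat{\mS}}^{(n)}=Z_{\hat{\mS}}^{(n)}\circ\Theta$ for every $n$. Because $\theta(-\infty)=0$, the extension operator $e$ for idempotent measures and the extension operator $e$ for fuzzy sets are also intertwined: for $\mu\in I(\hat X)$ with density $\lambda_\mu$, the density of $e(\mu)$ is $\lambda_\mu$ extended by $-\infty$, so $\Theta(e(\mu))=\theta\circ e(\lambda_\mu)=e(\theta\circ\lambda_\mu)=e(\Theta(\mu))$. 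Finally $\Theta(\mu_\mS)=u_\mS$ by Theorem \ref{fil13}, and $d_\theta=d_{\on{f}}\circ(\Theta\times\Theta)$ by the definition of $d_\theta$.

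Combining these facts: given $\mu\in I(\hat X)$, put $u:=\Theta(\mu)\in\FK(\hat X)$. Then
$$d_\theta\big(e(M_{\hat{\mS}}^{(n)}(\mu)),\mu_\mS\big)=d_{\on{f}}\big(\Theta(e(M_{\hat{\mS}}^{(n)}(\mu))),\Theta(\mu_\mS)\big)=d_{\on{f}}\big(e(Z_{\hat{\mS}}^{(n)}(u)),u_\mS\big),$$
and Theorem \ref{fil20} bounds the last expression by $\frac{\ve}{1-\alpha_\mS}+\alpha_\mS^n\,d_{\on{f}}(e(u),u_\mS)$; since $d_{\on{f}}(e(u),u_\mS)=d_{\on{f}}(e(\Theta(\mu)),\Theta(\mu_\mS))=d_\theta(e(\mu),\mu_\mS)$, we obtain exactly the asserted estimate.

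I expect no genuinely hard step: the argument is a pure transfer. The points requiring care are bookkeeping ones --- keeping track of which ground space ($\hat X$ or $X$) each of $M_{\hat{\mS}}$, $Z_{\hat{\mS}}$, $\Theta$ and $e$ lives on, checking that the two a priori unrelated extension operators $e$ really correspond under $\Theta$ (this is exactly where $\theta(-\infty)=0$ enters), and noting that the constant $\alpha_\mS$ feeding Theorem \ref{fil20} is the maximum of the Lipschitz constants of the \emph{original} maps $\phi_j$, hence literally the same number in the idempotent and in the fuzzy formulation.
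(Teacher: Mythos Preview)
Your proposal is correct and follows essentially the same route as the paper: both reduce the statement to Theorem \ref{fil20} via the conjugation $\Theta$, the key observations being that the fuzzy correspondence of $\hat{\mS}_{\on{mp}}$ is exactly $\hat{\fS}$, that $\Theta$ intertwines the two extension operators $e$ (the paper phrases this as $\Theta(e(\Theta^{-1}(u)))=e(u)$, which is your identity $\Theta\circ e=e\circ\Theta$ read from the other side), and that $\Theta(\mu_\mS)=u_\mS$. Your write-up is slightly more explicit about the bookkeeping (finiteness of $\hat X$, which ground space each $\Theta$ lives on), but there is no substantive difference in method.
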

\begin{proof}
{Take any $u\in {\FK(\hat{X})}$. It is easy to see that the density of $e(\Theta^{-1}(u))$ equals}
$$
e(\theta^{-1}\circ u)(x)=\left\{\begin{array}{ccc}\theta^{-1}\circ u(x)&\mbox{if}&x\in \hat{X}\\-\infty&\mbox{if}&x\in X\setminus\hat{X}\end{array}\right.{=}\theta^{-1}(e(u)(x))={(\theta^{-1}\circ e(u))}(x){.}
$$
Hence
$$
\Theta(e(\Theta^{-1}(u)))=\theta\circ \theta^{-1}\circ e(u)=e(u).
$$
 Now by Theorems \ref{fil13} and \ref{fil20}, and the {fact that the correspondence of the max-plus idempotent IFS $\hat{\mS}_{\on{mp}}$ is exactly the discretization $\hat{\fS}$ of $\fS$}, we have
$$
d_\theta(e(M^{(n)}_{\hat{\mS}}(\mu)),\mu_\mS){=}
{d_{\on{f}}}\Big(\Theta\Big(e\Big(\Theta^{-1}\Big(Z^{(n)}_{\hat{\mS}}(\Theta(\mu))\Big)\Big)\Big),\Theta(\mu_\mS)\Big)=
$$
$$
={d_{\on{f}}}\Big(e\Big(Z^{(n)}_{\hat{\mS}}(\Theta(\mu))\Big),u_{\hat{\mS}}\Big)\leq\frac{\varepsilon}{1-\alpha_\mS}+\alpha_{\mS}^n{d_{\on{f}}}(e(\Theta(\mu)),u_{\mS})=
$$
$$
=\frac{\varepsilon}{1-\alpha_\mS}+\alpha_{\mS}^nd_\theta(\Theta^{-1}(e(\Theta(\mu))),\Theta^{-1}(u_{\mS}))=\frac{\varepsilon}{1-\alpha_\mS}+\alpha_{\mS}^nd_\theta(e(\mu),\mu_{\mS})
$$
and the result follows.
\end{proof}


\subsection{Idempotent algorithm}\label{subsec:Idempotent algorithm}
For a max plus normalized {IFS} ${\mpS}=(X,(\phi_j)_{j=1}^L,(q_j)_{j=1}^L)$, on a {compact} metric space consisting of Banach contractions, we present an algorithm to generate discrete invariant idempotent measure for ${\mpS}$ with a desired resolution $\delta$, as well an attractor of $\mS$ with resolution $\delta$. The presented algorithm is an adaptation of the Discrete deterministic algorithm for fuzzy IFS \texttt{FuzzyIFSDraw($\mS$)} from \cite{COS21} and it {bases} on Theorem \ref{fil21} and the description of the density of $M_\mS(\mu)$ from Lemma \ref{fil23}. This procedure can be summarized in the next algorithm.\\

{\tt
\begin{tabbing}
aaa\=aaa\=aaa\=aaa\=aaa\=aaa\=aaa\=aaa\= \kill
     \> \texttt{{ DiscreteIFSIdempMeasureDraw}(${\mS}$)}\\
     \> {\bf input}: \\
     \> \>  \> $\delta>0$, {a} resolution.\\
     \> \>  \> {$\hat{X}$ appropriately chosen proper net.}\\
     \> \>  \> $K \subseteq \hat{X}$, any finite and {nonempty} subset (a list of points in $\hat{X}$).\\
     \> \>  \> $\nu$, any discrete idempotent measure such that $\supp (\nu)= K$. \\
     \> \>  \> The diameter $D$ of a ball in $(X,d)$ containing {$K$ and the attractor $A_{\mS}$}.\\
     \> {\bf output:} \\
     \> \> \>  A bitmap representing a discrete attractor with resolution  at most $\delta$.\\
     \> \> \> A bitmap  image representing a discrete invariant idempotent measure,\\ \>\>\>having resolution  at most $\delta$,
 with a gray scale color bar indicating\\ \>\>\> the mass of each pixel.\\
     \> {\bf Compute:}\\
     \> \> \> $\displaystyle \alpha_\mS$ the Lipschitz constant of the {underlying IFS $\mS$}\\
     \> \> \> $\varepsilon >0$ and $N \in \mathbb{N}$ such that $\frac{\ve}{1-\alpha_\mS}+\alpha_\mS^N \, D < \delta$\\
     \> {\bf Initialize}  $\displaystyle\mu:={\bigoplus_{x \in \hat{X}}{-\infty\odot\delta_{x}}}$,  $W:=\varnothing$\\
     \> {\bf for n from 1 to N do}\\
     \>  \>{\bf for $\ell$ from 1 to $\operatorname{Card}({K})$ do}\\
     \>  \>  \>  \>{\bf for $j$ from $1$ to $L$ do}\\
     \>  \>  \>  \>  \>${x}:={K}[\ell]$\\
     \>  \>  \>  \>  \>${y}:=\hat{\phi}_{j}({x})$\\
     \>  \>  \>  \>  \>${W:=W\cup\{y\}}$\\
     \>  \>  \>  \>  \> {$\mu({y}):=\max\{\mu({y}) ,  q_{j} + \nu({x})\}$}\\
     \>  \>  \>  \>{\bf end do} \\
     \>  \>{\bf end do} \\
     \> \> $\nu:=\mu$, $\displaystyle\mu:=\bigoplus_{x \in \hat{X}}-\infty\odot\delta_{x}$, {$K:=W$ and $W:=\varnothing$}\\
     \>{\bf end do}\\
     \>{\bf return: Print} {$K$} and $\nu$ \\
\end{tabbing}}
\begin{remark}\label{remarkAAA}\emph{
  Actually, we could obtain our approximation of $\mu_{\mS}$ by first approximating $u_{\mS}$ via \texttt{FuzzyIFSDraw($\mS$)} and then {use} $\mu_{\mS}=\Theta^{-1}(u_{\mS})$. However the adapted  version we presented here, \texttt{{ DiscreteIFSIdempMeasureDraw}($\mS$)}, is {much faster} because we do not need the discretization of the extension principle. This is why we rewrite the algorithm \texttt{FuzzyIFSDraw($\mS$)} as \texttt{{ DiscreteIFSIdempMeasureDraw}($\mS$)}. The reciprocal {of this approach} will not work because not all fuzzy {IFSs are associated} to {max-plus normalized IFSs}.}
\end{remark}

\begin{example}\label{ex:cantor_Mazurenco}
This example from \cite{MZ} {(\cite[Example 1]{MZ})}, uses a classic geometric fractal, the Middle Third Cantor set. Consider $X=[0,1]$ and the {max-plus normalized IFS $\mpS=(X,(\phi_1, \phi_2),(q_1,q_2))$, where}
{
\[
\left\{
  \begin{array}{ll}
      \phi_1(x)   & = \frac{1}{3}x\\
      \phi_2(x)   & = \frac{1}{3}(x+2)\\
  \end{array}
\right.
\]
}
and $q_1= 0, q_2= -1$. From \cite{MZ} we know that the invariant idempotent measure is:
$$
\mu_{\mS}=0 \odot \delta_{0} \oplus \bigoplus_{1 \leq i_{1}<\cdots<i_{k}}(-k) \odot \delta\left(\sum_{j=1}^{k} \frac{2}{3^{i_{j}}}\right).
$$
Consider, for example, the continuous function ${\vp}(x)=-2+3 x$. As $-2 \leq {\vp} \leq 1$ if {$$\mu_{\mS}({\vp})=(-k) \odot {\vp}\left(\sum_{j=1}^{k} \frac{2}{3^{i_{j}}}\right)$$} we get $-2 \leq  -k + 1$ or $k \leq 3$. Testing all the values in this range we conclude that $\mu_{\mS}({\vp})=(-1) \odot {\vp}{\left(\frac{2}{3}\right)} =-1$.\\

In the general case we do not have the exact formula for $\mu_{\mS}{(\vp)}$. {However, we can} use the algorithm \texttt{{ DiscreteIFSIdempMeasureDraw}($\mS$)}. For {example, for} a discretization with 100 points and 15 iterations we obtain $\mu_{\mS}({\vp})=-{0.9900}$ in few seconds and for 1000 points we get $\mu_{\mS}({\vp})=-{0.999000}$ and so on.\\

The picture of $\mu_{\mS}$ produced by the algorithm \texttt{{ DiscreteIFSIdempMeasureDraw}($\mS$)} do not show much graphically because its density is equal to $-\infty$ outside of {the Cantor ser}. However, the correspondent fuzzy set{, that is,} $$u_{\mS}=\Theta(\mu_{\mS}),$$
for $\theta(t)=1.1^t, t \leq 0$, given { on the left side of} Figure~\ref{fig:cantor_by_theta}, {is} indistinguishable from the right one, which was produced by the algorithm \texttt{FuzzyIFSDraw($\mS$)} (see \cite{COS21}), for the associated fuzzy IFS with {admissible system of grey level maps}\\
$$r_{1}(t)=\theta(q_1 + \theta^{-1}(t))= t \text{ and } r_{2}(t)=\theta(q_2 + \theta^{-1}(t))=  1.1^{-1+ \frac{\ln t}{\ln(1.1)}},$$
as predicted by Theorem~\ref{fil7}. As both approaches are equivalent, starting with the computation of $u_{\mS}$ by using the algorithm \texttt{FuzzyIFSDraw($\mS$)}, with a discretization with 1000 points and 15 iterations, we obtain again $\mu_{\mS}({\vp})=-0.999000$ for $\mu_{\mS}=\Theta^{-1}(u_{\mS})$.\\
\begin{figure}[H]
  \centering
  \includegraphics[width=4cm]{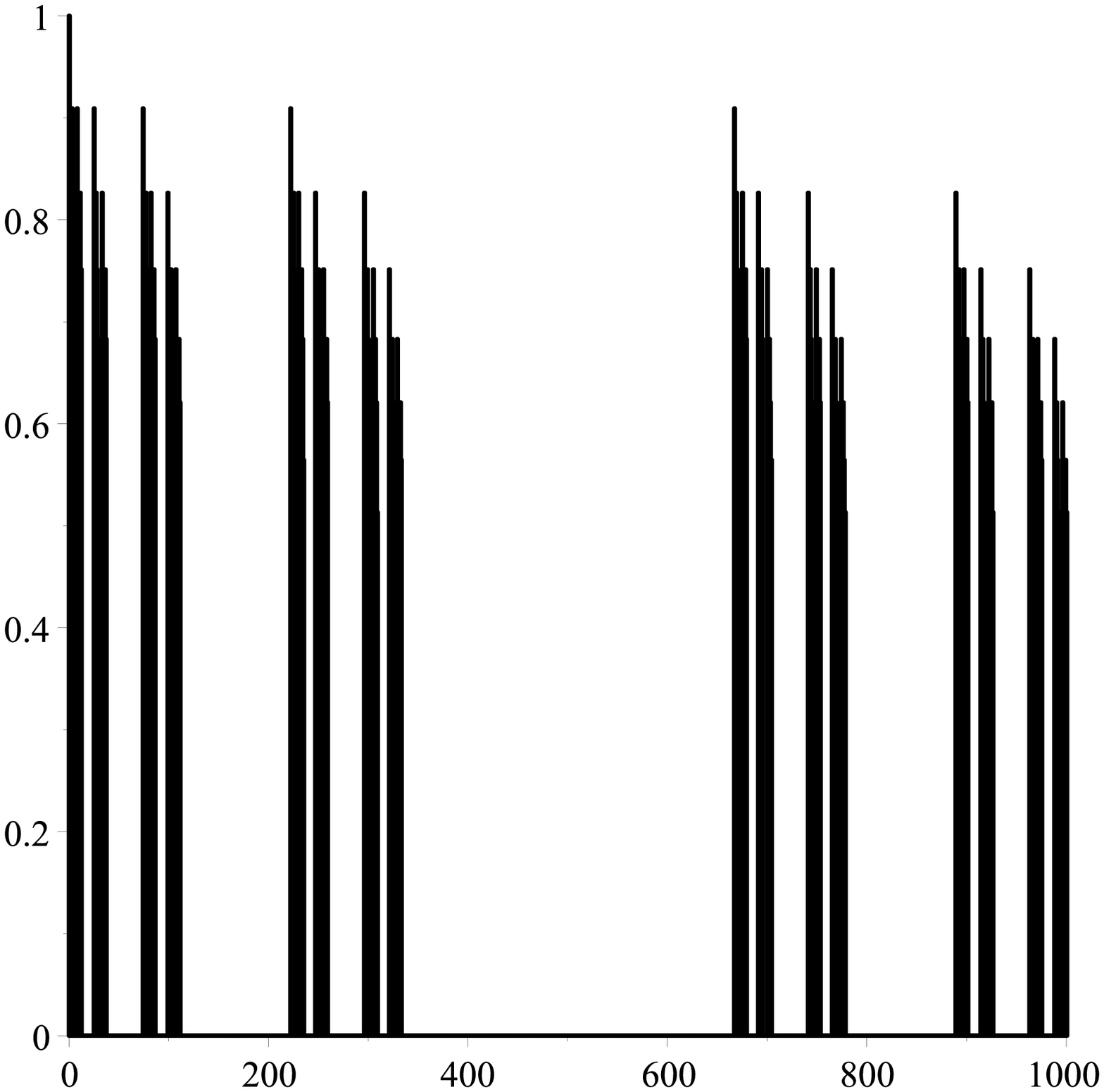}\;\quad
  \includegraphics[width=4cm]{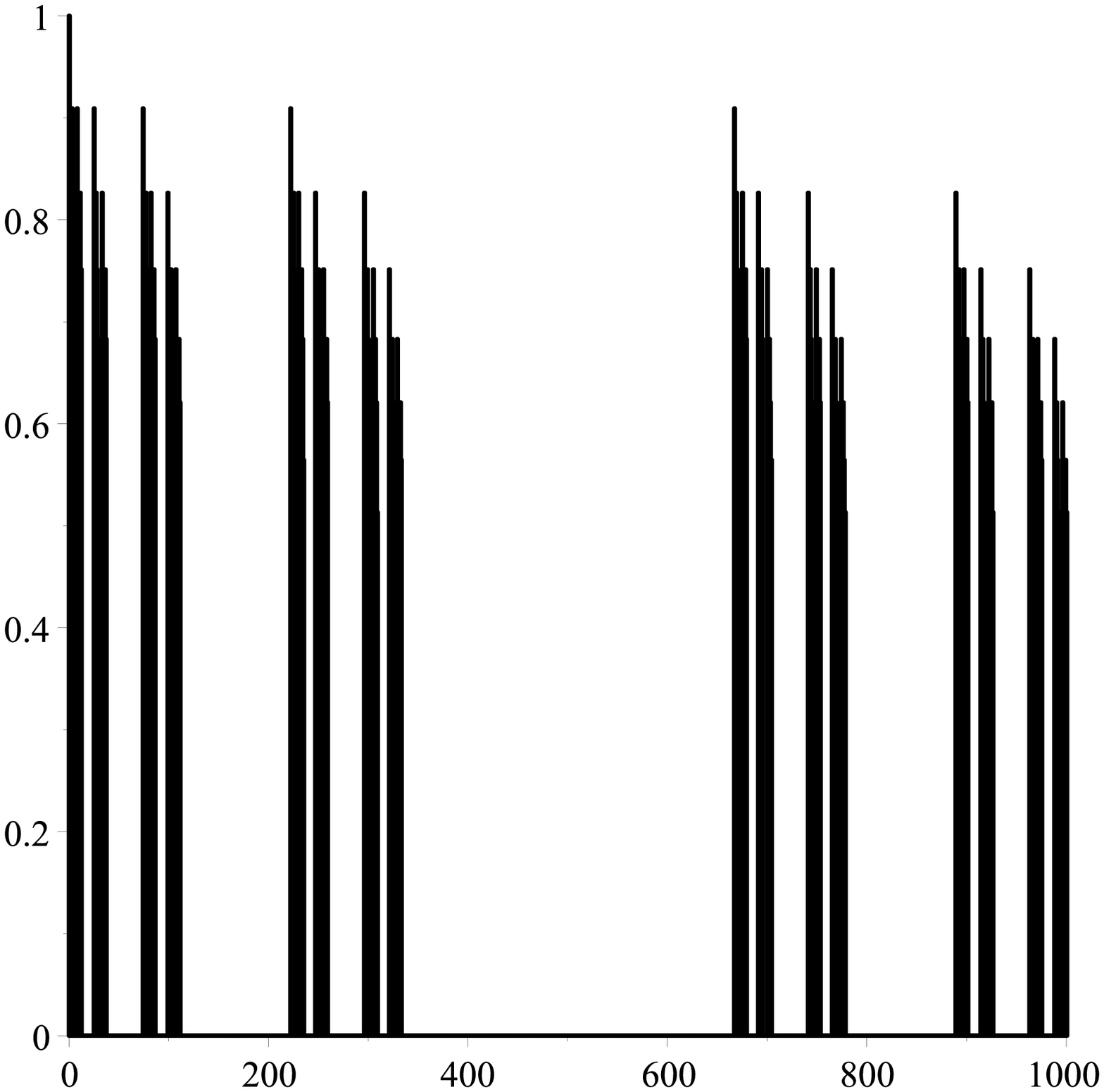}
  \caption{{On} the left, the output of the algorithm \texttt{{ DiscreteIFSIdempMeasureDraw}($\mS$)}  transformed by $\Theta$ and, {on} the right, the fuzzy attractor of the associated fuzzy IFS obtained by the algorithm \texttt{FuzzyIFSDraw($\mS$)}{.}}\label{fig:cantor_by_theta}
\end{figure}
\end{example}

\begin{example}\label{ex:maple_idemp_measures} This example uses a classic geometric fractal, the Maple Leaf. The approximation of the discrete idempotent invariant measure, through \texttt{{ DiscreteIFSIdempMeasureDraw}($\mS$)}, is presented {on} the Figure~\ref{MapleIdempMeasure}.  Consider $X=[0,1]^2$ and the {max-plus normalized IFS $\mpS=(X,(\phi_j)_{j=1}^4,(q_j)_{j=1}^4)$, where}
\[
\left\{
  \begin{array}{ll}
      \phi_1(x,y)   & = (0.8 x + 0.1, 0.8 y + 0.04)\\
      \phi_2(x,y)   & = (0.5 x + 0.25, 0.5 y + 0.4)\\
      \phi_3(x,y)   & = (0.355 x - 0.355 y +0.266,  0.355 x + 0.355 y + 0.078)\\
      \phi_4(x,y)   & = (0.355 x + 0.355 y +0.378,  -0.355 x + 0.355 y + 0.434)\\
  \end{array}
\right.
\]
and $q_1= 0, q_2= -7, q_3= -3$ and $q_4=-7$.\\
As we can see, in Figure~\ref{MapleIdempMeasure}, the darkness of each point $x \in \hat{X}$ represents how close to zero the density ${\lambda_\mS}$ of ${{\mu_{\mathcal{S}}}}$ is. The white points has density equal to $-\infty$.
\begin{figure}[H]
  \centering
  \includegraphics[width=8.5cm]{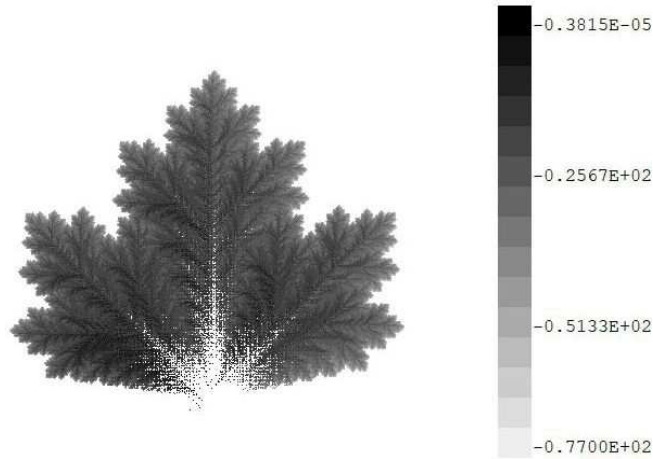}\;
  \caption{Algorithm \texttt{{ DiscreteIFSIdempMeasureDraw}($\mS$)} after 11 iterations in a $512\times 512$ net, producing only 206904 at the support from an initial one $\displaystyle\nu:=\delta_{(0.5,0.5)}$.}\label{MapleIdempMeasure}
  \end{figure}
In order to make a comparison we also run the algorithm \texttt{{ DeterminIFSIdempMeasureDraw}($\mS$)}, whose output is displayed in Figure~\ref{DiscreteMapleIdempMeasure}. We notice that Figure~\ref{MapleIdempMeasure} is much well defined and darker because we were able to perform much more iterations and the color scale is proportional to the lowest value at the support.
\begin{figure}[H]
  \centering
  \includegraphics[width=9cm]{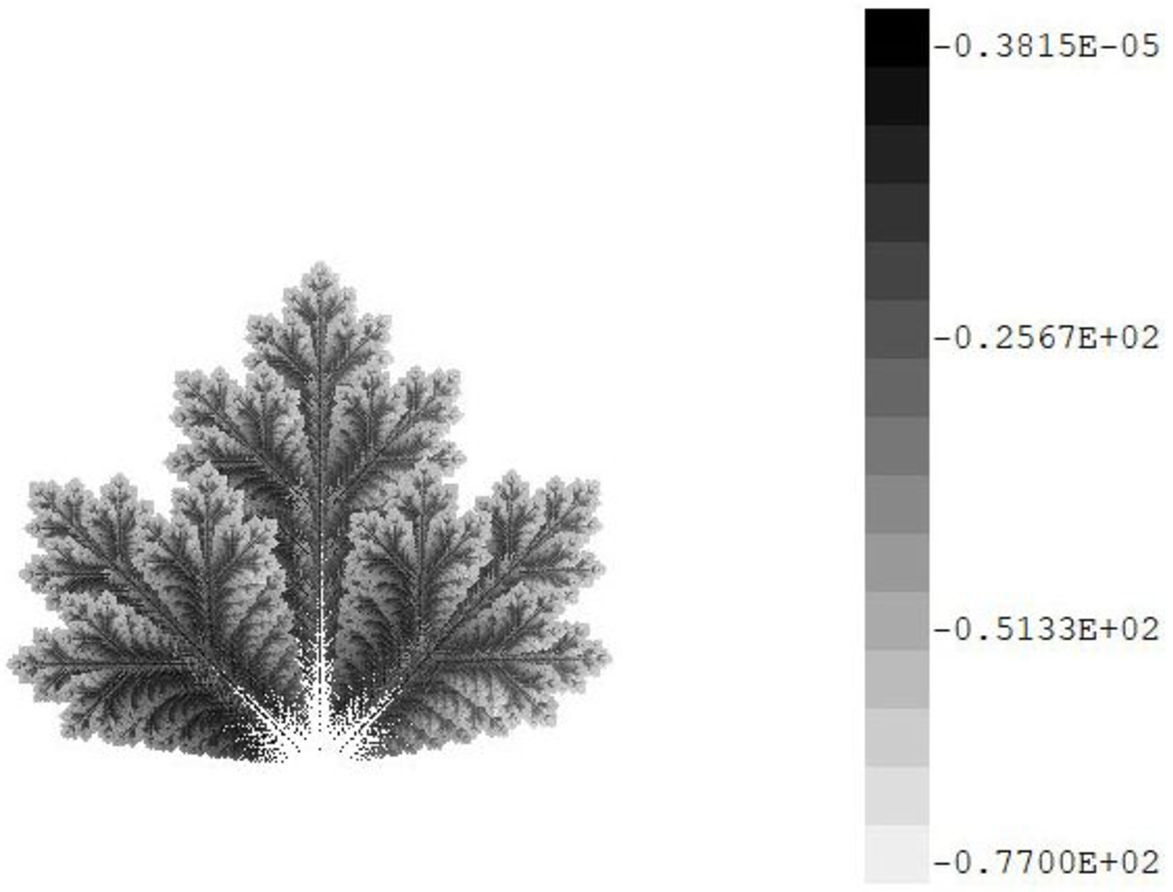}\;
  \caption{Algorithm \texttt{{ DeterminIFSIdempMeasureDraw}($\mS$)} after 12 iterations producing 4194304 points from an initial one $\displaystyle\nu:=\delta_{(0.5,0.5)}$.}\label{DiscreteMapleIdempMeasure}
  \end{figure}
\end{example}

\section{Algorithms for max-plus generalized IFSs in the sense of Miculescu and Mihail}
As we mentioned in Remark \ref{remarknew}, our results can be rewritten to the setting of generalized IFSs (GIFSs) in the sense of Miculescu and Mihail. For brief information on GIFSs we refer the reader to Miculescu and Mihail papers, for example \cite{Mih08} and \cite{Mih10}. Not going into details, let us point out instead of selfmaps of a metric space $X$, GIFSs consist of maps $\phi_i$ defined on finite product $X^m$ and with values in $X$. It turns out that a great part of the classical theory has a natural counterpart in such setting. In particular, contractive GIFSs $\mS$ generates attractors $A_\mS\in\K(X)$ that satisfy
$$
A_\mS=\bigcup_{i=1}^L\phi_i(A_\mS\times...\times A_\mS).
$$
In \cite{Oli17} we considered a fuzzy version of GIFSs, and in \cite{COS21} we introduced the discrete algorithm for generating images of fuzzy GIFSs attractors. It is easy to rewrite algorithms \texttt{DiscreteIFSIdempMeasureDraw}($\mS$)} and \texttt{{{DeterminIFSIdempMeasureDraw}}(${\mS}$)} for \emph{max plus normalized} GIFSs (whose definition is analogous to the definition of max plus normalized IFSs, and is clear from the context).\\
The only difference is that, instead of defining
$$
x:=K[\ell]
$$
we have to define
$$
x:=[K[\ell_1],...,K[\ell_m]]
$$
where $\ell_1,...,\ell_m$ are all taken from $1$ to $\on{Card}(K)$.

{Consider \texttt{{ DiscreteGIFSIdempMeasureDraw}(${\mS}$)} as being the algorithm obtained from\newline \texttt{{ DiscreteIFSIdempMeasureDraw}(${\mS}$)}  by the above modification.  We now present some examples for generalized IFSs, only for the discrete version due to its efficiency.
\begin{example}\label{example filip measures} This example uses the GIFS $\mathcal{G}$ appearing in \cite[Example 16]{Jaros-2016}. The approximation of the discrete idempotent invariant measure, through \texttt{{ DiscreteGIFSIdempMeasureDraw}($\mS$)}, is presented in Figure~\ref{FilipMeasurePic}.  Consider $X=[0,1]^2$ and the max-plus normalized IFS $\mpS=(X,(\phi_j)_{j=1}^3,(q_j)_{j=1}^3)$, where
\[\mS:
\left\{
  \begin{array}{ll}
       \phi_1((x_1,y_1),(x_2,y_2))&=(0.25 x_1+0.2 y_2, 0.25 y_1+0.2 y_2) \\
       \phi_2((x_1,y_1),(x_2,y_2))&=(0.25 x_1+0.2 x_2, 0.25 y_1+0.1 y_2+0.5)\\
       \phi_3((x_1,y_1),(x_2,y_2))&=(0.25 x_1+0.1 x_2+0.5, 0.25 y_1+0.2 y_2))\\
  \end{array}
\right.
\]
and $q_{1}=-2, \; q_{2}=0, \;q_{3}= 0$.
 \begin{figure}[H]
  \centering
  \includegraphics[width=8cm]{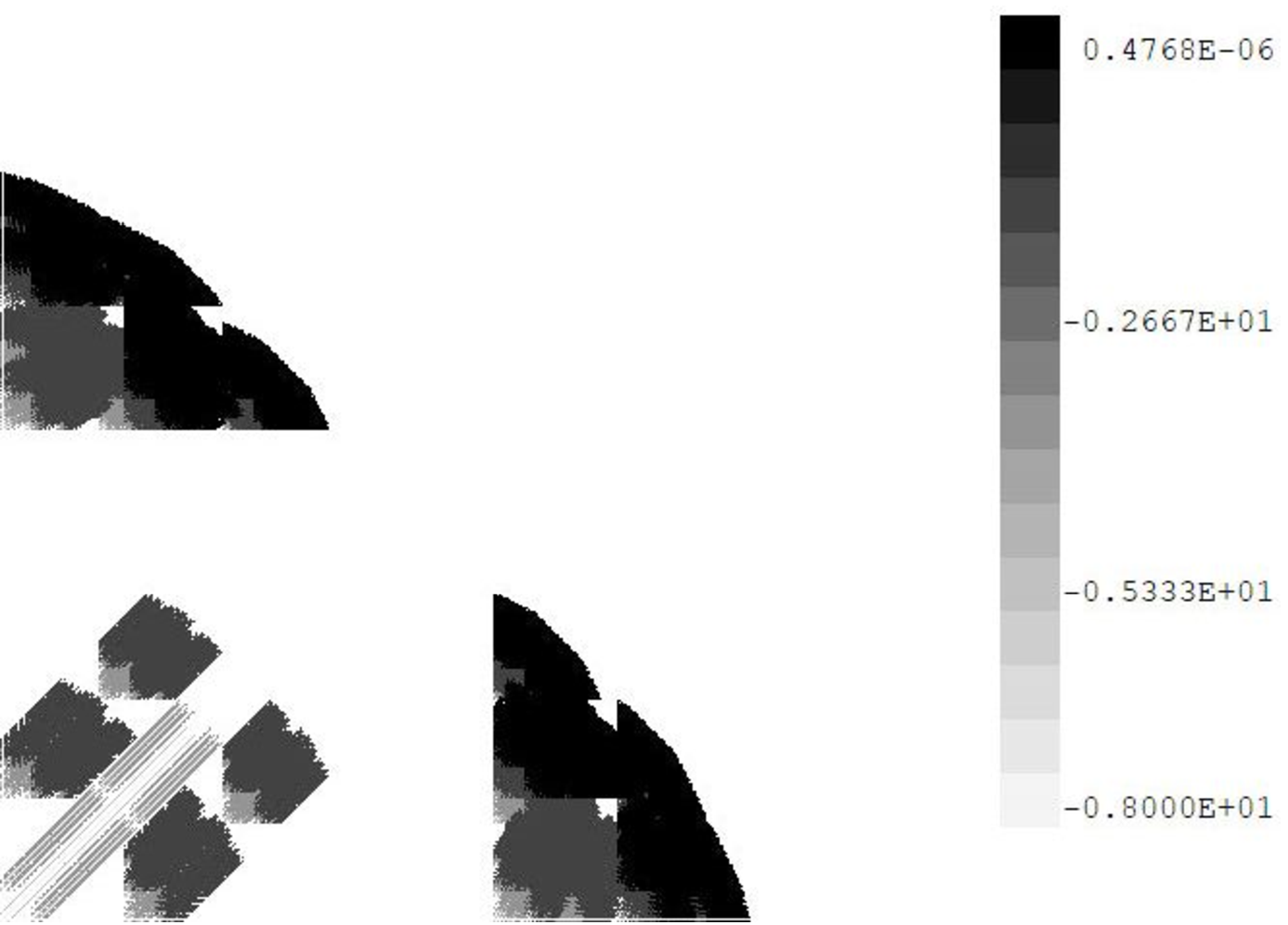}
  \caption{The idempotent invariant measure obtained by \texttt{{ DiscreteGIFSIdempMeasureDraw}($\mS$)} using a $512\times512$ pixels and 4 iterations.}\label{FilipMeasurePic}
  \end{figure}
\end{example}
\begin{example}\label{example final}
 This example came from \cite[Example 11.6]{COS21}. The approximation of the discrete idempotent invariant measure, through \texttt{{ DiscreteGIFSIdempMeasureDraw}($\mS$)}, is presented in Figure~\ref{example_final}.  Consider $X=[-0.1,2.1]^2$  and the max-plus normalized IFS $\mpS=(X,(\phi_j)_{j=1}^3,(q_j)_{j=1}^3)$, where
\[\mS :\left\{
  \begin{array}{ll}
    \phi_1((x_1,y_1),(x_2,y_2))&=(0.2 x_1+0.25 x_2+0.04 y_ 2,0.16 y_1-0.14 x_2+0.20 y_2+1.3) \\
    \phi_2((x_1,y_1),(x_2,y_2))&=(0.2 x_1-0.15 y_1-0.21 x_2+0.15 y_2+1.3,0.25 x_1+0.15 y_1+0.25 x_2+0.17) \\
    \phi_3((x_1,y_1),(x_2,y_2))&=(0.355  x_1+0.355  y_ 1+0.378,-0.355 x_1+0.355  y_1+0.434-0.03  y_2)\\
  \end{array}
\right.
\]
and $q_{1}=-1, \; q_{2}=0, \;q_{3}= -7$.
\begin{figure}[H]
\centering
  \includegraphics[width=8cm]{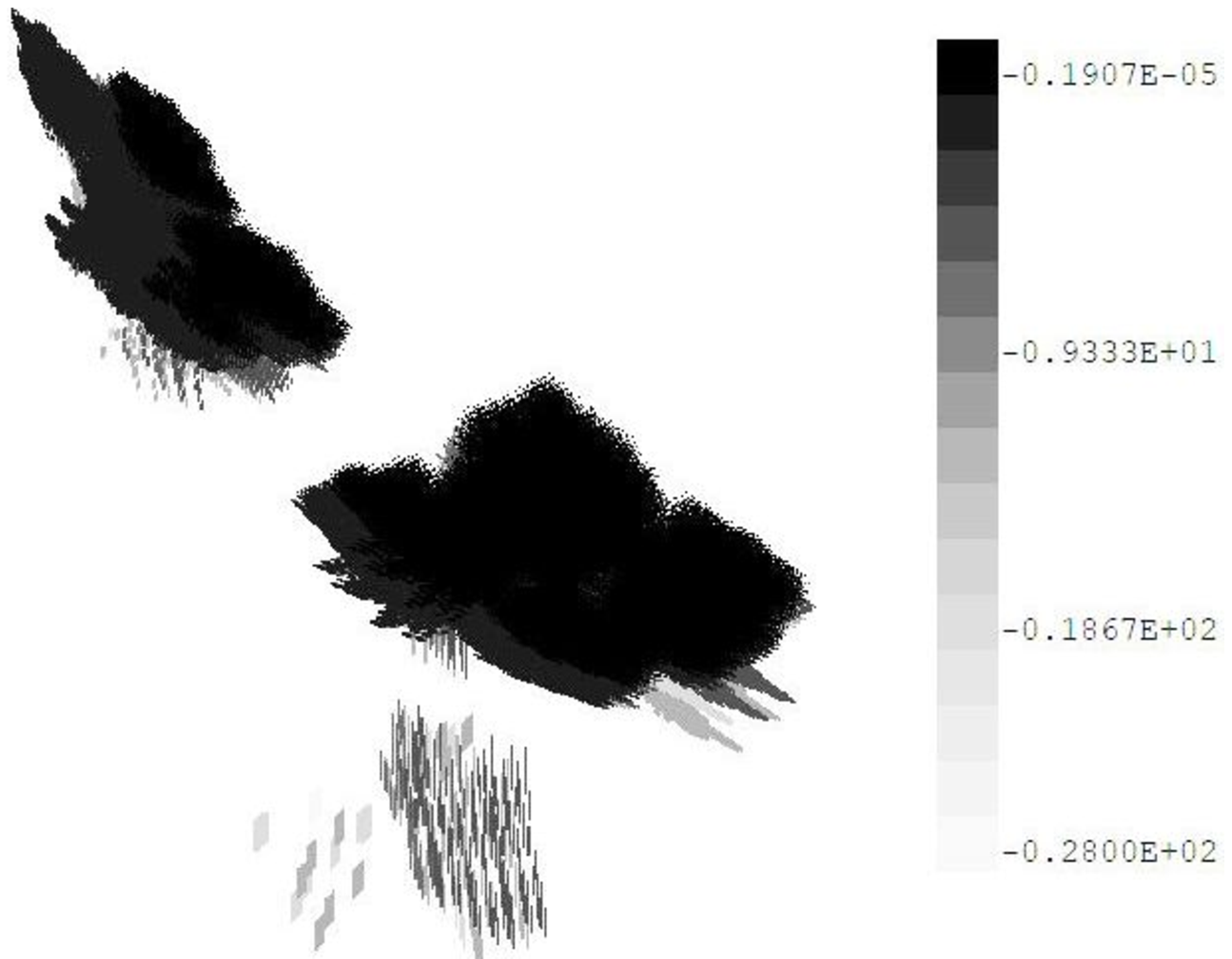}
  \caption{The idempotent invariant measure obtained by \texttt{{ DiscreteGIFSIdempMeasureDraw}($\mS$)} using a $512\times512$ pixels and 4 iterations.}\label{example_final}
  \end{figure}
\end{example}
}


\begin{thebibliography}{99}

\bibitem[Aki99]{Aki99}  M. Akian, \emph{Densities of idempotent measures and large deviations}. Trans. Amer. Math. Soc. 351 (1999), 4515--4543.

\bibitem[Bar88]{BAR88}
M.~F. Barnsley, {\em Fractals everywhere}. Academic Press, 1988.

\bibitem[BDEG88]{BDEG88}
M.~F. Barnsley, S.~G. Demko, J.~H. Elton, and J.~S. Geronimo. \emph{Invariant measures for {M}arkov processes arising from iterated  function systems with place-dependent probabilities}.
 { Ann. Inst. H. Poincar\'e Probab. Statist.} 24 (3) (1988), 367--394.


\bibitem[BSZ20]{BSZ} V. Brydun, A. Savchenko, and M. Zarichnyi, {\em  Fuzzy metrization of the spaces of idempotent measures. } European Journal of Mathematics 6 (2020), 98--109.

\bibitem[BU13]{Bzo13} A. Bzowski, M. Urbański,
\emph{A note on Nguyen–Fullér–Keresztfalvi theorem and Zadeh's extension principle}. Fuzzy Sets Syst. 213 (2013), 91--101.



\bibitem[CFMV92]{Cab92}
C. Cabrelli, B. Forte, U. Molter, E. Vrscay,
\emph{Iterated fuzzy set systems: A new approach to the inverse problem for fractals and other sets.}
J. Math. Anal. Appl. 171 (1992), 79--100.

\bibitem[DK94]{Dia94}
P. Diamond, P. Kloeden {\em Metric spaces of fuzzy sets: theory and applications}. World scientific, 1994.

\bibitem[COS20]{COS20}
R. D. da Cunha, E. R. Oliveira and F. Strobin,
\emph{A multiresolution algorithm to approximate the Hutchinson measure for IFS and GIFS}. Commun Nonlinear. Sci. Numer. Simulat. 91 (2020), 105423.

\bibitem[COS21]{COS21} R. D. da Cunha, E. R. Oliveira, and F. Strobin,
\emph{A multiresolution algorithm to generate images of generalized fuzzy fractal attractors}.
Numer. Algor. 86 (2021), 223--256.

\bibitem[DJP06]{DJP06} D. Dutkay, E. Dorin and P. Jorgensen, \emph{Iterated function systems, Ruelle operators, and invariant projective measures}. Math. Comp. 75 (2006), 1931--1970.


\bibitem[Elt87]{Elt87} J. Elton, \emph{An ergodic theorem for iterated maps}. Ergodic Theory Dynam. Systems 7 (4) (1987), 481--488.

\bibitem[FL99]{Fan99} A. H. Fan, K. Lau, \emph{Iterated function system and Ruelle Operator}, J. Math. Anal. Appl. 231, no. 2 (1999), 319--344.

{
\bibitem[Ful14]{Ful14} R. Full\'er, \emph{On Generalization of Nguyen’s Theorem: A Short Survey of Recent Developments.} Advances in soft computing, intelligent robotics and control, 183–190,
Top. Intell. Eng. Inform., 8, Springer, Cham, 2014.
}

\bibitem[Hut81]{HUT}
J. Hutchinson, \emph{Fractals and self-similarity}.
{Indiana Univ. Math. J.} 30 (1981), 713--747.

\bibitem[JJ07]{JJ07} J. Jachymski and I. J\'o\'zwik,
\emph{Nonlinear contractive conditions: a comparison and related problems.} Banach Center Publ. 77 (2007), 123--146.

{\bibitem[JMS16]{Jaros-2016}
Patrycja Jaros; {\L}ukasz Ma\'{s}lanka;~Filip Strobin.
\newblock Algorithms generating images of attractors of generalized iterated
  function systems.
\newblock {\em Numerical Algorithms}, 73:477--499, 2016.}

\bibitem[KM88]{Kol88} V. N. Kolokoltsov and V. P. Maslov, \emph{The general form of the endomorphisms in the space of continuous functions with values in a numerical semiring}. Sov. Math. Dokl. 36 (1988), 55--59.

\bibitem[KM97]{Kol97} V. N. Kolokoltsov and V. P. Maslov, \emph{Idempotent analysis and its applications}, Kluwer Publishing House, 1997.

\bibitem[LT09]{LTV} D. La Torre and E. R. Vrscay, {\em A generalized fractal transform for measure-valued images.} Nonlinear Anal., 71 (2009), no. 12, 1598--1607.

\bibitem[LMS02]{Lit02} G. L. Litvinov, V. P. Maslov and G. B. Shpiz, \emph{Idempotent (asymptotic) analysis and the representation theory}. Asymptotic combinatorics with application to mathematical physics (St. Petersburg, 2001), 267--278, NATO Sci. Ser. II Math. Phys. Chem., 77, Kluwer Acad. Publ., Dordrecht, 2002.

\bibitem[Lit07]{Lit07} G. L. Litvinov, \emph{Maslov dequantization, idempotent and tropical mathematics: A brief introduction}. J. Math. Sci. (N.Y) 140 (2007), no. 3, 426--444.

\bibitem[LO09]{Lop09} A. O. Lopes, E. R. Oliveira, {\em Entropy and variational principles for holonomic probabilities of IFS.}  Discrete and Contin. Dyn. Syst. 23 (2009), no. 3, 937--955.

\bibitem[Mat75]{MTK}
J. Matkowski. {\em Integrable solutions of functional equations.} Dissertationes Math., 127 (1975), 68pp.

\bibitem[MZ18]{MZ} N. Mazurenko, M. Zarichnyi,
\newblock \emph{Invariant idempotent measures,}
\newblock Carpathian Math. Publ. 10 (2018), no. 1, 172--178.

\bibitem[MM08]{Mih08} A. Mihail and R. Miculescu,
\newblock \emph{Applications of fixed point theorems in the theory of generalized ifs. }
\newblock Fixed Point Theory Appl. 1 (2008), Art. ID 312876.

\bibitem[MM10]{Mih10} Radu Miculescu, Alexandru Mihail, {\em Generalized IFSs on
Noncompact Spaces}, Fixed Point Theory Appl. Volume 2010, Article ID 584215,
11 pp.

\bibitem[MD99]{MoDo99} P. Del Moral, M. Doisy, \emph{Maslov idempotent probability calculus. I.} Theory Probab. Appl. 43 (1999), no. 4, 562--576.


\bibitem[Ngu78]{Ngu78} H. T. Nguyen,
    \newblock  \emph{A note on the extension principle for fuzzy sets}.
    \newblock  J. Math. Anal. Appl. 64 (1978), 369--380.

\bibitem[OS17]{Oli17}
E. R. Oliveira and F. Strobin,
\newblock \emph{Fuzzy attractors appearing from gifzs}.
\newblock {Fuzzy Sets Syst.} 331 (2018), 131--156.

\bibitem[BRZ10]{Rep10} L. Bazylevych, D. Repov\v{s}, M. Zarichnyi,
\newblock {\em Spaces of idempotent measures of compact metric spaces.}
\newblock Topology Appl. 157 (2010), no. 1, 136--144.

\bibitem[Zad65]{Zad}
L.~A. Zadeh.
\newblock {\em Fuzzy sets.}
\newblock Information and Control 8 (1965), 338--353.

\bibitem[Zai20]{ZAI}
{A.A. Zaitov\newblock {\em  On a metric of the space of idempotent probability measures.}
\newblock Appl. Gen. Topol., 21 (2020), no. 1, 35--51.}

\bibitem[Zar10]{Zar} M. M Zarichnyi,
\newblock {\em Spaces and maps of idempotent measures, }
\newblock   Izv. Math. 74, no. 3 (2010), 481--499.


\end{thebibliography}
\end{document}